\documentclass[12pt]{amsart}

\usepackage{amssymb,amsfonts}
\usepackage{amsmath,amsthm,amscd}
\usepackage{mathrsfs} 
\usepackage{enumerate}
\usepackage{verbatim}
\usepackage{color}
\usepackage[colorlinks]{hyperref}
\usepackage[displaymath, tightpage]{preview}

\usepackage{beton}


\frenchspacing
\textwidth=15.5cm
\textheight=23cm
\parindent=16pt
\oddsidemargin=-0.5cm
\evensidemargin=-0.5cm
\topmargin=-0.5cm

\usepackage[T1]{fontenc}
\usepackage[utf8x]{inputenx} 

\newcommand{\stone}{\mathrm{ult}}



\newcommand{\con}{\mathfrak c}

\newcommand{\eps}{\varepsilon}

\newcommand{\alg}{\mathfrak A}


\newcommand{\EE}{{\mathcal E}}

\newcommand{\DD}{{\mathcal D}}

\newcommand{\NN}{{\mathcal N}}




\newcommand{\er}{\mathbb R}

\newcommand{\cov}{{\rm cov}}

\newcommand{\sm}{\setminus}

\newcommand{\sub}{\subseteq}

\newtheorem{thm}{Theorem}[section]

\newtheorem{lem}[thm]{Lemma}
\newtheorem{cor}[thm]{Corollary}
\newtheorem{prop}[thm]{Proposition}

\newtheorem{prob}[thm]{Problem}

\theoremstyle{remark}

\begin{document}

\title{Measures on Suslinean spaces}

\subjclass[2010]{03E35,03E75,28A60}
\keywords{Suslin Hypothesis, Suslinean spaces, Martin's Axiom, random model, Radon measure}

 \thanks{The  authors were partially supported by NCN grant 2013/11/B/ST1/03596 (2014-2017).}

\author[Piotr Borodulin--Nadzieja]{Piotr Borodulin-Nadzieja}
\author[Grzegorz Plebanek]{Grzegorz Plebanek}
\address{Instytut Matematyczny, Uniwersytet Wroc\l awski, pl. Grunwaldzki 2/4, 50-384 Wroc\l aw, Poland}
\email{pborod@math.uni.wroc.pl}
\email{grzes@math.uni.wroc.pl}
\date{November 15, 2015}

\begin{abstract}
  We study the existence of non-separable compact spaces that support a measure and are small from the topological point of view. In particular, we show
  that under Martin's axiom there is  a non-separable compact space supporting a measure which has countable $\pi$-character and which cannot be mapped continuously onto $[0,1]^{\omega_1}$.
  On the other hand, we prove that in the random model
  there is no non-separable compact space having countable $\pi$-character and supporting a measure.

\end{abstract}

\maketitle

\section{Introduction} \label{introduction}
The well known Suslin Hypothesis can be expressed in the following form:
\begin{quote}
{\em Every linearly ordered compact space satisfying the countable chain condition is separable.}
\end{quote}

We shall call an arbitrary non-separable {\em ccc} compact space a \emph{Suslinean space}.
Clearly, Suslinean spaces do exist in $\mathsf{ZFC}$. Perhaps the most obvious example is $[0,1]^\kappa$ with the standard product topology, where $\kappa>\mathfrak{c}$.
Such a space, however, is far from being linearly ordered. Indeed,
linearly ordered spaces cannot be mapped continuously onto $[0,1]^{\omega_1}$.

The Suslin Hypothesis was shown to hold under $\mathsf{MA}_{\omega_1}$. Later it turned out that
linearity of the space in question can be relaxed to some kind of topological \emph{smallness}.
For instance, Hajnal and Juhasz (\cite{Juhasz}) proved that under $\mathsf{MA}_{\omega_1}$ every ccc compact space with a $\pi$-base of size less than $\mathfrak{c}$ is separable. By purely topological methods one can deduce from this result that under
$\mathsf{MA}_{\omega_1}$ there are no Suslinean first-countable spaces or Suslinean spaces of countable tightness (see Tall \cite{tall}).

Let us recall that no space which is first-countable or has  countable tightness can be continuously mapped onto $[0,1]^{\omega_1}$.
In the light of the results mentioned above  it is natural to ask if the following statement is consistent:

\begin{quote}
\em
Every
Suslinean space  can be mapped continuously onto $[0,1]^{\omega_1}$.
\end{quote}

 In \cite{Todorcevic} Todor\v{c}evi{\'c} called such a conjecture ``the ultimate version of
Suslin hypothesis''.
Another question which suggests itself here (cf. \cite{Juhasz2}) is:

\begin{quote}
\em
Is it consistent that there is no Suslinean space of countable $\pi$-character?
\end{quote}

Both the questions have been answered negatively. First, Bell (\cite{Bell}) proved that $\mathsf{MA}_{\omega_1}$ is consistent with the existence of a Suslinean space of countable $\pi$-character which does not map continuously onto $[0,1]^{\omega_1}$. Then Moore (\cite{Moore}) gave an example of such space whose
existence is implied by $\mathsf{MA}$. Finally,  Todor\v{c}evi{\'c} (\cite{Todorcevic})
constructed in $\mathsf{ZFC}$ a compact space with the same properties.

The aim of this article is to consider similar questions for the class of compacta that
support a measure. In particular, we discuss the following two questions.

\begin{prob} \label{problem}
	Is there a Suslinean space supporting a measure which cannot be mapped continuously onto $[0,1]^{\omega_1}$?
\end{prob}

\begin{prob} \label{problem2}
	Is there a Suslinean space of countable $\pi$-character supporting a measure?
\end{prob}

 We say that a space $K$ \emph{supports a measure} $\mu$ if $\mu(U)>0$ for every nonempty open $U\subseteq K$ (in other words, $\mu$ is \emph{strictly positive} on $K$).
 Clearly, if a compact space supports a measure then it is
ccc.

Assuming the continuum hypothesis Kunen \cite{Kunen} constructed a first-countable Suslinean space supporting a measure so the answer to \ref{problem} and \ref{problem2} is positive under CH.
Later Kunen and van Mill \cite{Kunen-vanMill} proved the following (cf.\ Plebanek \cite{Pl97}, Theorem 5.1).

\begin{thm} \label{kunen}
The following are equivalent

\begin{itemize}
\item[(i)]
there is a Suslinean first-countable Corson compact space supporting a measure;
\item[(ii)] $\cov(\NN_{\omega_1})=\omega_1$.\footnote{See Section \ref{preliminaries} for the notation
	used here and Section \ref{pi-weight} for further discussion.}
\end{itemize}
\end{thm}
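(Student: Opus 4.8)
The plan is to prove the equivalence by establishing the two implications separately. The direction $(ii)\Rightarrow(i)$ is the constructive heart of the matter: from a covering of the product measure space $\{0,1\}^{\omega_1}$ by $\omega_1$ null sets one manufactures the desired compactum. The direction $(i)\Rightarrow(ii)$ is a reflection argument that reads the cardinal equality off of a given example, the point being that first countability of a Corson compact forces every point to be a $G_\delta$ realised by countably many coordinates.

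For $(ii)\Rightarrow(i)$, fix a family $\langle N_\alpha:\alpha<\omega_1\rangle$ of $\lambda$-null sets covering $\{0,1\}^{\omega_1}$, where $\lambda$ is the usual product measure. Replacing each $N_\alpha$ by a null $G_\delta$ superset, we may assume $N_\alpha$ depends on only countably many coordinates, say on $c_\alpha\in[\omega_1]^{\le\omega}$; and a routine closing-off lets us reindex so that $N_\alpha$ depends only on coordinates below $\alpha$, with $\langle c_\alpha\rangle$ increasing and continuous. Now build, by recursion on $\alpha<\omega_1$, a continuous chain of compact spaces $K_\alpha\subseteq\Sigma(\{0,1\}^{\alpha})$ together with a coherent chain of measures $\mu_\alpha$ on $K_\alpha$ projecting onto one another; at a successor step, over the part already built on coordinates $<\alpha$, one attaches ``fresh'' points occupying only finitely or countably many new coordinates, arranged so that (a) $\mu_{\alpha+1}$ stays strictly positive on $K_{\alpha+1}$ — which is what makes the inverse limit of the $\mu_\alpha$ a genuine strictly positive measure — and (b) the countably many coordinates controlled by $N_\alpha$ are spent to block the $\alpha$-th candidate for a countable dense set. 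Taking $K=\varprojlim K_\alpha$, one gets: $K$ is Corson, being a subspace of a $\Sigma$-product over $\omega_1$; the inverse-limit measure $\mu$ is strictly positive by (a); $K$ is first countable because each of its points is pinned down coordinatewise by a countable block of coordinates fixed at some stage $<\omega_1$, hence is a $G_\delta$; and $K$ is non-separable since, by (b), any countable $D\subseteq K$ is absorbed into the traces of $\bigcup_{\alpha<\beta}N_\alpha$ for some $\beta<\omega_1$, after which coordinates $\ge\beta$ witness that $D$ is not dense.

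For $(i)\Rightarrow(ii)$, let $K\subseteq\Sigma(\mathbb{R}^{\Gamma})$ be first countable, Corson, non-separable, carrying a strictly positive Radon probability $\mu$. Since $K$ is Corson it carries a point-countable $T_0$-separating family $\cU$ of cozero sets; as $K$ is non-separable $\cU$ must be uncountable (a countable $T_0$-separating family of cozero sets would embed $K$ into $[0,1]^{\omega}$, making it metrizable), so fix $\{U_\alpha:\alpha<\omega_1\}\subseteq\cU$ with $\mu(U_\alpha)>0$. Point-countability of $\cU$ gives a Borel injection $e\colon K\to\Sigma(\{0,1\}^{\omega_1})$, $e(x)(\alpha)=1\iff x\in U_\alpha$, and $\nu:=e_{*}\mu$ is a Borel probability on $\{0,1\}^{\omega_1}$, carried by the $\Sigma$-product (in the sense that $e[K]$ lies in it), with all one-dimensional marginals non-trivial. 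One then argues that such $\nu$ exists only if $\cov(\NN_{\omega_1})=\omega_1$: first countability of $K$ prevents a measure of countable Maharam type from doing the job, so $\nu$ has Maharam type $\ge\omega_1$ and by Maharam's theorem there is a measure-algebra copy of $\lambda$ inside $\nu$; transporting the fact that $\nu$ is carried by the $\Sigma$-product across this copy yields a $\lambda$-conull set which is covered by $\omega_1$ Baire-$\lambda$-null sets (one per countable ``support pattern''), whence $\cov(\NN_{\omega_1})=\omega_1$.

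The main obstacle is the bookkeeping in $(ii)\Rightarrow(i)$: the recursion must keep strict positivity of the partial measures, first countability (each point $G_\delta$), and the blocking of all countable dense-set candidates all going at once, so the step ``attach fresh points'' has to be carried out without creating unwanted atoms or a countable $\pi$-base. On the $(i)\Rightarrow(ii)$ side the delicate point is the transfer step — turning ``$\nu$ is carried by the $\Sigma$-product'' into an honest $\omega_1$-sized cover of $\{0,1\}^{\omega_1}$ by $\lambda$-null sets — which requires controlling images of non-Baire null sets under the Maharam isomorphism, and this is precisely where the hypothesis that $K$ is first countable (equivalently, that the relevant neighbourhoods depend on countably many coordinates) is genuinely used.
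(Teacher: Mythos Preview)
The paper does not prove Theorem~\ref{kunen}: it is stated in the Introduction as a result of Kunen and van Mill, with a reference to \cite{Kunen-vanMill} and to \cite[Theorem~5.1]{Pl97}, and no argument is given. There is therefore no in-paper proof to compare your attempt against.

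Treating your text on its own terms, what you have written is a plan rather than a proof, and the steps you yourself label ``obstacles'' and ``delicate'' are precisely where all the content lies. In $(ii)\Rightarrow(i)$ the phrase ``the countably many coordinates controlled by $N_\alpha$ are spent to block the $\alpha$-th candidate for a countable dense set'' is not a construction: nothing specifies what is actually adjoined at a successor step, how strict positivity of the partial measures survives that adjunction, or in what sense the null sets $N_\alpha$ interact with candidate dense sets of a space that has not yet been built. In the Kunen--van Mill argument the cover by null sets is used rather differently --- essentially to guarantee, at each stage, a closed set of positive measure missing a prescribed countable set --- and non-separability is obtained by ensuring that for every $\alpha$ the projection $K\to K_\alpha$ is not one-to-one on any dense set, not by diagonalising against an enumeration of countable subsets of $K$.

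In $(i)\Rightarrow(ii)$ there are concrete errors. The map $e$ need not be injective: a $T_0$-separating family separates points only as a whole, and an arbitrary $\omega_1$-sized subfamily has no reason to. The sentence ``first countability of $K$ prevents a measure of countable Maharam type from doing the job, so $\nu$ has Maharam type $\ge\omega_1$'' is doubly problematic: first, $\nu=e_*\mu$ is a pushforward, so its type can only be \emph{smaller} than that of $\mu$, and nothing you wrote bounds it from below; second, the fact one actually needs here (that a strictly positive Radon measure of countable type on a Corson compactum forces the space to be metrizable) is a consequence of the Corson hypothesis, not of first countability. Finally, the ``transfer'' from ``$\nu$ is carried by the $\Sigma$-product'' to an honest cover of $2^{\omega_1}$ by $\omega_1$ many $\lambda_{\omega_1}$-null sets is the heart of this direction, and you only name it without carrying it out. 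For a complete argument you should consult \cite{Kunen-vanMill} and \cite{Pl97} directly.
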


For a definition of a Corson compact space see e.g. \cite{Kunen-vanMill};
it is worth-recalling that every separable subspace of a Corson compactum is metrizable.

We present below a consistent negative solution to Problem \ref{problem2}:  we show that under a certain axiom (satisfied e.g. in the standard random model) there is no Suslinean space of countable
$\pi$-character and supporting a measure.

On the other hand, in Section \ref{example-ma} assuming $\mathsf{MA}$ we construct a space $K$
giving  a positive solution
to both Problem \ref{problem} and Problem \ref{problem2},
i.e.\ our space $K$ is a non-separable compact space supporting a measure and of countable $\pi$-character.
The space $K$ is moreover linearly fibered, i.e. it can be mapped continuously onto $2^\omega$ in such a way that all fibers are linearly ordered. Hence,
 it is in a sense a direct generalization of the Suslin line.
 In particular, $K$ cannot be mapped continuously onto $[0,1]^{\omega_1}$. Moreover, using slightly different methods, we show that under a weaker version of Martin's Axiom there is a small compact space which supports a measure but not
 a countably determined one (so, in a sense, it is close to being non-separable).

We do not know if Problem \ref{problem} can be resolved in the usual set theory.
However,  Theorem \ref{kunen} and Theorem \ref{main} indicate that \emph{usually} the answer to \ref{problem} is positive: if we seek for a consistent negative answer, we have to assume
Martin's axiom for the measure algebras (that is, $\cov(\NN_{\omega_1})>\omega_1$) and the
negation of $\mathsf{MA}_{\omega_1}$, so the standard model obtained by adding $\omega_2$ random reals might be a natural candidate.

The above results seem to reveal an interesting phenomenon. Usually Martin's Axiom eliminates pathological objects (in particular various kinds of Suslinean spaces) and it has been used for this purpose from the very beginning of its creation.
However, the space from Theorem \ref{main}
is a pathology tolerated by Martin's Axiom but not by the classical random model. Perhaps this is an indication that the random model is the universe in which measures on compact spaces behave more orderly than elsewhere.

It seems that under additional axioms the methods used in \cite{Todorcevic} can provide yet another example of small non-separable space supporting a measure. This is a subject of further investigations (see \cite{slaloms}).

\section{Preliminaries} \label{preliminaries}

We use the standard terminology and notation concerning topology, so $w(K)$
is the topological weight of a space $K$ while $\pi(K)$ is the $\pi$-weight, the smallest cardinality of a $\pi$-base of $K$.

Let $K$ be a compact space and $x\in K$. Recall that
a family of nonempty open sets $\mathcal{P}$ is a \emph{$\pi$-base at} $x$ if
for every open $V\ni x$ there is $U\in \mathcal{P}$ such that $U\subseteq V$.
By $\pi(x, K)$ we denote the smallest cardinality of a $\pi$-base at $x$.
The $\pi$-character of $K$ is defined as $\pi\chi(K)=\sup\{\pi(x,K)\colon x\in K\}$.

Given a  Boolean algebra $\mathfrak{A}$,  a family $\mathcal{P}\sub \alg^+$ is its $\pi$-base if for every $A\in\mathfrak{A}^+$ there is  $B\in \mathcal{P}$ with  $B\leq A$. Clearly, every $\pi$-base of a Boolean algebra defines a $\pi$-base of its Stone space.

If $\mathcal{I}$ is an ideal on a set $K$, then
\[ \mathrm{add}(\mathcal{I}) = \min\{|\mathcal{A}|\colon \mathcal{A}\subseteq \mathcal{I}, \ \bigcup \mathcal{A}\notin \mathcal{I}\}, \]
\[ \mathrm{non}(\mathcal{I}) = \min\{|X|\colon X\subseteq K, \ X\notin \mathcal{I}\}, \]
\[ \mathrm{cov}(\mathcal{I}) = \min\{|\mathcal{A}|\colon \mathcal{A}\subseteq \mathcal{I}, \ \bigcup \mathcal{A} = K\}. \]

By a {\em measure} on a topological  space $K$ we mean a finite Borel measure which is Radon, i.e.\ inner-regular with respect to compact sets. Typically, we consider Radon measures on compacta.

If $\mu$ is a measure on $K$ and $\mathcal{N}_\mu$ is the ideal of $\mu$-null sets, then we write $\mathrm{non}(\mu) = \mathrm{non}(\mathcal{N}_\mu)$ and $\mathrm{cov}(\mu) = \mathrm{cov}(\mathcal{N}_\mu)$.

By a measure $\mu$ on a Boolean algebra we mean a finite and  \emph{finitely additive}
function $\mu\colon\alg\to\er$ (note that usually we consider  Boolean algebras which are not $\sigma$-complete).

Let $\alg$ be  a Boolean algebra and let $K=\stone(\alg)$ be its Stone space
(of all ultrafilters). If $A\in\alg$, then by $\widehat{A}=\{x\in K\colon A\in x\}$ we denote the corresponding clopen subset of $K$.
Recall that a measure $\mu$ on $\alg$ can be
transferred to the measure $\widehat{\mu}$ on the algebra of clopen subsets of $K$ via the formula
$\widehat{\mu}(\widehat{A})=\mu(A)$. In turn, $\widehat{\mu}$
can be uniquely extended to a
($\sigma$-additive and Radon) measure on $K$.

 If $\mu$ is a measure on $K$, then by $\mathfrak{A}(\mu)$ we denote the \emph{measure
algebra} of $\mu$, i.e. $\mathfrak{A}(\mu) = \mathrm{Bor}(K)/\{A\colon \mu(A)=0\}$.

Let $\kappa$ be a cardinal number. Denote by $\lambda_\kappa$ the standard product measure on $2^\kappa$ (we also write $\lambda$ for $\lambda_\omega$). Recall that for $\kappa>\omega$, although the product $\sigma$-algebra $\Sigma$ of $2^\kappa$ is
much smaller than the family of Borel sets, by the classical Kakutani theorem,
$Bor(2^\kappa)$ lies in the completion of $\Sigma$ with respect to $\lambda_\kappa$.
Therefore, $\lambda_\kappa$ is in fact defined for all Borel sets.
Let $\mathcal{N}_\kappa$ be the ideal of $\lambda_\kappa$-null subsets of $2^\kappa$. Denote by $\mathfrak{A}_\kappa$ the measure algebra of
$\lambda_\kappa$.

Let $\mu$ the Radon measure on a space $K$.
Let us recall that the {\em Maharam type} of $\mu$ is the least cardinal number $\kappa$ such that there exists a
family $\mathcal{A}$ of Borel sets of size  $\kappa$ approximating $\mu$ with respect to symmetric difference, that is
\[ \inf\{\mu(A\bigtriangleup B)\colon A\in
	\mathcal{A}\}=0\]
for every Borel set $B$.

By the  Maharam structure theorem,
if a Radon measure $\mu$ of of Maharam type $\le\kappa$ then the algebra $\mathfrak{A}(\mu)$
embeds into $\mathfrak{A}_\kappa$, the measure algebra of
$\lambda_\kappa$. Moreover, if $\mathfrak{A}$ is \emph{homogeneous}, i.e.\
the measure $\mu_{| A}$ has the same Maharam type for each nonzero $A\in \mathfrak{A}$,
then  $\mathfrak{A}(\mu)$ and $\mathfrak{A}_\kappa$ are isomorphic.

\section{Suslinean spaces supporting a measure under $\mathsf{MA}$} \label{example-ma}

The main aim of this section is to give a partial positive  answer to Problems \ref{problem} and \ref{problem2}.
 The space $K$ we construct under $\mathsf{MA}$ has the following property:  there is a continuous mapping $f\colon K \to 2^\omega$ such that its fibers are homeomorphic to ordinal numbers
(in particular, they are linearly ordered and scattered). Recall that no scattered compact space can be mapped onto $[0,1]$  by a continuous function. Consequently, by the
proposition given below, our space $K$
does not admit a continuous surjection onto $[0,1]^{\omega_1}$.

\begin{prop}(\cite{Tkachenko}, see also \cite{Drygier})
	Assume $K$ is a compact space, $M$ is compact and metric and $f\colon K \to M$ is a continuous mapping. If $K$ can be mapped continuously onto $[0,1]^{\omega_1}$, then there is $t\in M$ such that $f^{-1}[t]$ can be mapped continuously onto $[0,1]^{\omega_1}$.
\end{prop}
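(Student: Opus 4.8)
The plan is to argue by contradiction: suppose $g\colon K\to[0,1]^{\omega_1}$ is a continuous surjection but for every $t\in M$ the fiber $f^{-1}[t]$ does \emph{not} map continuously onto $[0,1]^{\omega_1}$. The idea is that a metric quotient $M$ is ``too small'' to carry any of the $\omega_1$ coordinates of $[0,1]^{\omega_1}$, so all $\omega_1$ coordinates must be ``resolved inside the fibers'', and then one such fiber must already be big enough. Concretely, for each $\alpha<\omega_1$ let $g_\alpha\colon K\to[0,1]$ be the $\alpha$-th coordinate of $g$, and let $h_\alpha=g\har([0,1]^{\{\beta\le\alpha\}})$ be the partial product.

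First I would use the metrizability of $M$ to get a countable base, hence $M$ has a countable network, and so any continuous image of $M$ in a metric space is determined by countably many sets. The key point: a continuous map from $K$ that factors through $f$ (i.e.\ is constant on each fiber $f^{-1}[t]$) has range of weight at most $w(M)=\omega$; in particular it cannot be (a nondegenerate-on-$\omega_1$-many-coordinates part of) the map onto $[0,1]^{\omega_1}$. Next I would invoke the standard fact (essentially the argument behind ``scattered compacta don't map onto $[0,1]$'' lifted one level) that if a compact $K$ maps onto $[0,1]^{\omega_1}$ and $f\colon K\to M$ has all fibers failing to map onto $[0,1]^{\omega_1}$, then one can build, by transfinite recursion of length $\omega_1$, an increasing sequence of clopen-type data / a tree of sets in $K$ witnessing that uncountably many coordinates $g_\alpha$ are ``independent relative to $f$''; a counting/$\Delta$-system or pressing-down argument on $M$ (using that $M$ is separable metric, so has only countably many ``types'') then forces some single value $t\in M$ at which uncountably many of these independent pieces accumulate, so that $f^{-1}[t]$ itself carries a copy of $[0,1]^{\omega_1}$, contradiction.

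More carefully, the cleanest route is probably: consider the diagonal map $(f,g)\colon K\to M\times[0,1]^{\omega_1}$; its image $L$ is compact, the projection $L\to M$ is onto with fibers $(\{t\}\times[0,1]^{\omega_1})\cap L$, each homeomorphic to $g[f^{-1}[t]]$. Since $M$ is metric, apply a theorem on continuous images (e.g.\ every compact space mapping onto $[0,1]^{\omega_1}$ contains a subspace mapping onto $[0,1]^{\omega_1}$ with the map being ``irreducible-like'', or use that $[0,1]^{\omega_1}$ has calibre $\omega_1$ only in a weak sense — here I would rather cite the Haydon/Shapirovskii machinery): there must be $t$ with $g[f^{-1}[t]]$ of weight $\omega_1$ and mapping onto $[0,1]^{\omega_1}$, because the ``$\omega_1$-dimensionality'' cannot dissipate into the metric base of $M$.

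The main obstacle I expect is making precise the heuristic ``the uncountably many coordinates cannot all be distinguished by the metric factor, so they concentrate in a single fiber''. The honest way to handle it is via Shapirovski\u{\i}'s theorem characterizing when a compact space maps onto $[0,1]^\kappa$ in terms of free sequences / $\pi$-character, together with the behavior of $\pi$-character and free sequences under maps with metric range; alternatively, one reduces to the known one-dimensional statement (no continuous surjection of a scattered space onto $[0,1]$, equivalently onto $2^\omega$ of positive measure) by slicing $[0,1]^{\omega_1}$ coordinate-by-coordinate and iterating $\omega_1$ times, keeping track that at a limit stage of the recursion the relevant fiber stabilizes because $M$, being second countable, cannot split the tree of approximations $\omega_1$-many times. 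Since this is a cited result (\cite{Tkachenko}, \cite{Drygier}), I would state it and defer the technical recursion to those references, sketching only the contradiction skeleton above.
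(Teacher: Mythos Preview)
The paper does not give a proof of this proposition at all; it is stated with a citation to \cite{Tkachenko} and \cite{Drygier} and then used as a black box. So there is no ``paper's own proof'' to compare against, and your final sentence --- state it and defer to the references --- is exactly what the paper does.

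That said, what precedes that sentence is not a proof. You offer three different strategies (a pressing-down/$\Delta$-system argument on the coordinates $g_\alpha$, the diagonal map $(f,g)$ into $M\times[0,1]^{\omega_1}$, and an appeal to \v{S}apirovski\u{\i}'s $\pi$-character characterization), but none is carried to the point where it actually produces a single $t\in M$. The real content of the result is precisely the step you flag as ``the main obstacle'': turning the heuristic ``a second-countable quotient cannot absorb $\omega_1$ independent coordinates'' into a concrete recursion or tree argument. Concretely, in your pressing-down sketch you never say what function is being pressed down, what the stationary set is, or why the fiber over the resulting $t$ inherits an \emph{independent} family rather than just uncountably many sets; in the diagonal-map approach you invoke ``Haydon/Shapirovskii machinery'' without saying which statement and how it applies to the slices $L\cap(\{t\}\times[0,1]^{\omega_1})$.

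If you want an actual argument rather than a citation, the standard route is: pass to a surjection $g\colon K\to 2^{\omega_1}$, and for each finite partial function $s\colon\omega_1\to 2$ consider the closed set $F_s=f\bigl[g^{-1}[s]\bigr]\subseteq M$. Since $M$ is second countable, no strictly decreasing $\omega_1$-chain of closed sets exists in $M$; use this to find a node $s$ (and a point $t\in F_s$) beyond which, for uncountably many $\alpha$, both extensions $F_{s\cup\{(\alpha,0)\}}$ and $F_{s\cup\{(\alpha,1)\}}$ still contain $t$. That uncountable set of coordinates then witnesses a surjection of $f^{-1}[t]$ onto a copy of $2^{\omega_1}$. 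Your sketch gestures at this shape but does not isolate the decreasing-chain fact about $M$ that drives it.
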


We shall use the following consequence of Martin's Axiom.

\begin{lem}(\cite[33D]{Fremlin}) \label{sequences}
Assume $\mathsf{MA}$. Let $\mu$ be a measure on a compact space $K$ such that $\mu$ has countable Maharam type.
Let $\alpha<\mathfrak{c}$ and let $\{E^\xi_n\colon \xi<\alpha, n\in\omega\}$ be a family of closed
	subsets of $2^\omega$ such that for each $\xi<\alpha$ the sequence $(E^\xi_n)_n$ is increasing and $\lim_n \lambda(E^\xi_n)=1$. Then for each $\varepsilon>0$ there is a closed set
$E\subseteq 2^\omega$ such that
	\begin{enumerate}[(i)]
		\item $\lambda(E)>1-\varepsilon$,
		\item the set $\{i\colon E\nsubseteq E^\xi_i\}$ is finite for every $\xi$.
	\end{enumerate}
\end{lem}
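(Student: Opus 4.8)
The compact space $K$ and the measure $\mu$ play no role in the conclusion, so the plan is to work entirely inside $(2^\omega,\lambda)$ and to obtain $E$ by a single application of $\mathsf{MA}$ to a carefully chosen ccc poset. Write $V^\xi_n=2^\omega\setminus E^\xi_n$; then each sequence $(V^\xi_n)_n$ is decreasing, consists of open sets, and $\lambda(V^\xi_n)\to 0$. Fix a rational $\varepsilon'\in(0,\varepsilon)$. It is enough to produce a function $\sigma\colon\alpha\to\omega$ with $\lambda\bigl(\bigcup_{\xi<\alpha}V^\xi_{\sigma(\xi)}\bigr)\le\varepsilon'$: then $E:=\bigcap_{\xi<\alpha}E^\xi_{\sigma(\xi)}=2^\omega\setminus\bigcup_{\xi<\alpha}V^\xi_{\sigma(\xi)}$ is closed with $\lambda(E)\ge1-\varepsilon'>1-\varepsilon$, and since $(E^\xi_n)_n$ is increasing we get $E\subseteq E^\xi_i$ for every $i\ge\sigma(\xi)$, so $\{i:E\nsubseteq E^\xi_i\}\subseteq\sigma(\xi)$ is finite.

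To force such a $\sigma$, I would use the poset $\mathbb P$ whose conditions are triples $p=(U_p,F_p,\sigma_p)$, where $U_p\subseteq2^\omega$ is open with $\lambda(U_p)<\varepsilon'$, $F_p\in[\alpha]^{<\omega}$, $\sigma_p\colon F_p\to\omega$, and $\bigcup_{\xi\in F_p}V^\xi_{\sigma_p(\xi)}\subseteq U_p$, ordered by coordinatewise reverse inclusion. For $\xi<\alpha$ let $D_\xi=\{p\in\mathbb P:\xi\in F_p\}$. Each $D_\xi$ is dense, since given $q$ with $\xi\notin F_q$ one chooses $n$ with $\lambda(V^\xi_n)<\varepsilon'-\lambda(U_q)$ and extends $q$ by replacing $U_q$ with $U_q\cup V^\xi_n$, $F_q$ with $F_q\cup\{\xi\}$ and $\sigma_q$ with $\sigma_q\cup\{(\xi,n)\}$. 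As $\alpha<\mathfrak{c}$, once we know $\mathbb P$ is ccc, $\mathsf{MA}$ yields a filter $G\subseteq\mathbb P$ meeting every $D_\xi$.

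Verifying that $\mathbb P$ is ccc is, I expect, the only real difficulty, and it is where the precise form of the conditions matters. Given $\{p_\gamma:\gamma<\omega_1\}\subseteq\mathbb P$, apply the $\Delta$-system lemma to the finite sets $F_{p_\gamma}$ to fix a root $R$, and thin out so that $\sigma_{p_\gamma}\restriction R$ is one fixed function and $\lambda(U_{p_\gamma})<\varepsilon'-1/k$ for one fixed $k\ge1$. Each $U_{p_\gamma}$ is open in $2^\omega$, hence an increasing union of clopen sets, so choose clopen $C_\gamma\subseteq U_{p_\gamma}$ with $\lambda(U_{p_\gamma}\setminus C_\gamma)<1/(3k)$; there are only countably many clopen subsets of $2^\omega$, so thin out once more so that $C_\gamma=C$ is fixed. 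Then for any two surviving $\gamma\neq\gamma'$ the functions $\sigma_{p_\gamma},\sigma_{p_{\gamma'}}$ agree on $F_{p_\gamma}\cap F_{p_{\gamma'}}=R$, and $\lambda(U_{p_\gamma}\cup U_{p_{\gamma'}})\le\lambda(C)+\lambda(U_{p_\gamma}\setminus C)+\lambda(U_{p_{\gamma'}}\setminus C)<(\varepsilon'-1/k)+2/(3k)<\varepsilon'$, so $(U_{p_\gamma}\cup U_{p_{\gamma'}},\,F_{p_\gamma}\cup F_{p_{\gamma'}},\,\sigma_{p_\gamma}\cup\sigma_{p_{\gamma'}})$ is a common extension; hence $\mathbb P$ has no uncountable antichain. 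The point of the extra open ``reserve'' $U_p$ is exactly this: the bare poset of finite functions $\sigma_p$ with $\lambda(\bigcup_{\xi\in F_p}V^\xi_{\sigma_p(\xi)})<\varepsilon'$ is not obviously ccc, whereas open sets can be approximated from inside by clopen sets and there are only countably many of the latter.

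Finally, from a filter $G$ meeting all the $D_\xi$ set $\sigma=\bigcup_{p\in G}\sigma_p$ (a function with domain $\alpha$, by directedness of $G$ and $G\cap D_\xi\neq\emptyset$) and $U=\bigcup_{p\in G}U_p$. For $\xi<\alpha$, any $p\in G\cap D_\xi$ gives $V^\xi_{\sigma(\xi)}\subseteq U_p\subseteq U$, so $\bigcup_{\xi<\alpha}V^\xi_{\sigma(\xi)}\subseteq U$. Since $2^\omega$ is second countable, $U$ is the union of the basic clopen sets it contains; any finite union of those is compact, hence contained in a single $U_p$ ($p\in G$, by directedness), hence of measure $<\varepsilon'$; by continuity of $\lambda$ from below, $\lambda(U)\le\varepsilon'$. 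Thus $\lambda(\bigcup_{\xi<\alpha}V^\xi_{\sigma(\xi)})\le\varepsilon'$, and by the reduction in the first paragraph $E:=\bigcap_{\xi<\alpha}E^\xi_{\sigma(\xi)}$ is closed, has $\lambda(E)\ge1-\varepsilon'>1-\varepsilon$, and satisfies $\{i:E\nsubseteq E^\xi_i\}\subseteq\sigma(\xi)$ for every $\xi$, giving (i) and (ii).
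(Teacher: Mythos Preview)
The paper does not give its own proof of this lemma; it simply cites Fremlin \cite[33D]{Fremlin} and uses the result as a black box. Your argument is correct and is essentially the standard proof: the poset you describe is a bookkeeping variant of amoeba forcing for measure, and your ccc verification via the $\Delta$-system lemma together with clopen approximation of the open ``reserve'' $U_p$ is exactly the usual one. Your observation that the hypotheses on $K$ and $\mu$ are irrelevant to the stated conclusion is also right; the lemma as phrased here is a specialization (to $2^\omega$ and $\lambda$) of a more general statement in Fremlin's book, and only that specialization is needed in the paper. One small stylistic point: in the final paragraph you could shortcut the argument that $\lambda(U)\le\varepsilon'$ by noting directly that $\lambda$ is inner regular with respect to compact sets, so $\lambda(U)=\sup\{\lambda(C):C\subseteq U\text{ compact}\}$, and every compact $C\subseteq U=\bigcup_{p\in G}U_p$ lies in finitely many $U_p$'s, hence in a single $U_q$ by directedness; the basic-clopen decomposition is not needed.
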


We shall construct the required space $K$ as a Stone space $\stone(\alg)$ of a certain Boolean subalgebra $\alg$ of Borel subsets of $2^\omega$.

\begin{thm} \label{main} Assume $\mathsf{MA}$. There is a compact non-separable space $K$ supporting a measure and a continuous mapping $f\colon K\to 2^\omega$ such that each fiber of $f$ is homeomorphic to an ordinal number and, consequently, $K$ does not map continuously onto $[0,1]^{\omega_1}$. Moreover, $K$ has countable $\pi$-character. \end{thm}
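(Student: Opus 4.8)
The plan is to realize $K$ as the Stone space of a Boolean subalgebra $\alg\sub\Borel(2^\om)$ with $\Clopen(2^\om)\sub\alg$. Then the inclusion $\Clopen(2^\om)\hra\alg$ dualizes to a continuous surjection $f\colon K=\stone(\alg)\to 2^\om$, and for $t\in 2^\om$ the fiber $f^{-1}[t]$ is homeomorphic to $\stone(\alg/J_t)$, where $J_t=\{A\in\alg\colon t\notin\overline A\}$ is the ideal in $\alg$ generated by the clopen subsets of $2^\om$ missing $t$. I build $\alg=\bigcup_{\alpha<\con}\alg_\alpha$ recursively, with $\alg_0=\Clopen(2^\om)$ and $\alg_{\alpha+1}=\langle\alg_\alpha\cup\{A_\alpha\}\rangle$ for a single Borel generator $A_\alpha\sub 2^\om$ added at stage $\alpha$ (unions at limits); the measure on $K$ is the Radon measure $\mu$ obtained, as recalled in Section~\ref{preliminaries}, by transferring $\lambda\har\alg$ to the clopen algebra of $K$ and extending. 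Each $A_\alpha$ will be a closed, positive-measure set; since a closed set contained in the complement of a dense set has empty interior, the generators that matter will be nowhere dense, so the structure they add to a fiber $f^{-1}[t]$ sits strictly below every clopen neighbourhood of $t$, which is the mechanism producing the extra ordinal points.

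Three invariants are maintained. \textbf{(Strict positivity.)} No nonzero element of $\alg_\alpha$ is $\lambda$-null, i.e.\ $\lambda\har\alg$ is a strictly positive finitely additive measure; by the transfer--extension procedure of Section~\ref{preliminaries} this makes $\mu$ strictly positive on $K$, so $K$ supports $\mu$, and $\mu$ has countable Maharam type. \textbf{(Ordinal fibers.)} The generators are chosen so that for every $t$ the images of the $A_\alpha$ in $\alg/J_t$ form a well-ordered chain below $1$ of countable length; since the images of $\Clopen(2^\om)$ in $\alg/J_t$ are already $\{0,1\}$, the quotient is then the clopen algebra of a countable ordinal and $f^{-1}[t]$ is homeomorphic to that ordinal, in particular scattered, so by the Proposition above $K$ does not map continuously onto $[0,1]^{\om_1}$. \textbf{(Non-separability.)} The Stone space $\stone(\alg)$ is separable iff $\alg$ embeds into $\pom$; and any countable dense $D\sub K$ has $f[D]$ countable dense in $2^\om$, so $D\sub\bigcup_n f^{-1}[t_n]$ for that sequence, and then, since $A\notin J_t\iff t\in\overline A$, one checks that if some nonzero $A\in\alg$ satisfies $\overline A\cap f[D]=\varnothing$ then $D$ is not dense. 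Hence, fixing an enumeration $\{S_\alpha\colon\alpha<\con\}$ of the (at most $\con$-many) countable dense subsets of $2^\om$, it suffices that the generator $A_\alpha$ chosen at stage $\alpha$ be disjoint from $S_\alpha$.

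The heart of the construction, and where $\mathsf{MA}$ is used through Lemma~\ref{sequences}, is the choice of $A_\alpha$ at stage $\alpha<\con$: we need a single closed positive-measure set that (a) is disjoint from $S_\alpha$, (b) introduces no nonzero $\lambda$-null element into $\alg_{\alpha+1}$, and (c) for every $t\in 2^\om$ is, modulo $J_t$, either $0$, or $1$, or a legitimate end-extension of the chain already present in $\alg_\alpha/J_t$ in a way that keeps that chain countable --- clause (c) being required uniformly over the uncountably many $t$. Requirement (a) forces $A_\alpha$ to be topologically thin, while (b) and (c) govern how $A_\alpha$ meets, modulo null sets, the finitely-many-at-a-time elements built from the earlier $A_\beta$; since these can have arbitrarily small measure, a set merely of measure close to $1$ does not suffice. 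What rescues the step is Lemma~\ref{sequences}: feeding it the fewer-than-$\con$ increasing sequences of closed sets $E^\xi_n\nearrow$ that code, for $\xi<\alpha$, the ``large'' regions on which $A_\alpha$ must not disturb the positivity and chain conditions inherited from stage $\xi$, we obtain a closed $E$ with $\lambda(E)>1-\eps_\alpha$ lying eventually inside each of them; intersecting $E$ with a closed set of large measure disjoint from the countable set $S_\alpha$ (and, where appropriate, with a clopen set to localize its effect) yields the desired $A_\alpha$.

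Granting the recursion, the remaining verifications are routine. $K=\stone(\alg)$ is compact and $f$ is a continuous surjection; the statements about the fibers and about $\mu$ follow from the first two invariants together with Section~\ref{preliminaries} and the cited Proposition; non-separability follows from the third invariant. Finally $\pi\chi(K)=\om$: given $u\in K$ with $t=f(u)$, the algebra $\alg/J_t$ is countable, so there is a countable subalgebra $\alg'\sub\alg$ containing $\Clopen(2^\om)$ and countably many generators that maps onto $\alg/J_t$, and one checks that $\{\widehat A\colon A\in\alg',\ u\in\widehat A\}$ is a $\pi$-base at $u$. I expect clause (c) --- preserving, for \emph{all} $t$ simultaneously, the well-ordered chain structure of every fiber while also meeting (a) and (b) --- to be the main obstacle, and the precise bookkeeping that makes Lemma~\ref{sequences} applicable at every stage to be the technical core of the proof.
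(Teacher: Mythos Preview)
Your overall strategy---generate $\alg$ over $\Clopen(2^\om)$ by closed positive-measure sets and use Lemma~\ref{sequences} at each stage---is the paper's. Two simplifications in the paper are worth noting. First, it does not maintain your invariant (b): it simply lets $K$ be the support of the transferred measure inside $\stone(\alg)$, which removes that bookkeeping entirely. Second, the fiber condition is the concrete relation $F_\alpha\subseteq^* F_\beta$ for $\beta<\alpha$ (meaning $F_\alpha\setminus F_\beta$ is closed); this is precisely what the output $E$ of Lemma~\ref{sequences} provides when the $E^\xi_n$ are chosen as $F_\xi\cup K_n$ with $K_n\sub 2^\om\setminus F_\xi$ closed of large measure. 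Your abstract description of clause~(c) hides this, and you never actually say what the sequences fed into Lemma~\ref{sequences} are.

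There is, however, a genuine gap in your $\pi$-character argument. You require that the chain in $\alg/J_t$ have \emph{countable} length, equivalently that $\{\alpha\colon t\in A_\alpha\}$ be countable for every $t$, and then argue that a countable $\alg'$ surjecting onto $\alg/J_t$ gives a $\pi$-base at any $u\in f^{-1}[t]$. But in fact that argument yields a local \emph{base}: any minterm $\widehat B\ni u$ with $B=C\cap\bigcap_i A_{\alpha_i}^{\eps_i}$ can be shrunk to an element of $\alg'$ by absorbing each factor $A_{\alpha_i}^{0}$ with $t\notin A_{\alpha_i}$ into a clopen neighbourhood of $t$. So your $K$ would be first-countable. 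Under $\mathsf{MA}_{\omega_1}$ (hence under $\mathsf{MA}$ with $\con>\omega_1$, the only case of interest) there is no first-countable Suslinean space, as recalled in the introduction; thus your three invariants are jointly unachievable. In the paper the fibers are ordinals $<\con$, not necessarily countable, and countable $\pi$-character comes from a device you are missing: the additional requirement $\lambda\bigl(\bigcup_n F_{\gamma+n}\bigr)=1$ for every limit $\gamma$, arranged by applying Lemma~\ref{sequences} with $\eps=1/(n+1)$ at stage $\gamma+n$. Then for $x\in K$ with $\gamma=\sup\{\alpha\colon x\in\widehat F_\alpha\}$, the countable family $\{\widehat F_{\gamma+n}\cap\widehat U\colon n<\om,\ U\in\mathcal U\}$ is a $\pi$-base at $x$, even though $\gamma$ may be large.
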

\begin{proof}
	If $F$, $G$ are closed subsets of the Cantor set $2^\omega$, then we write $F\subseteq^* G$ to denote that  for every $x\in G$ there is an open neighbourhood $U\ni x$ such that $F \cap U \subseteq G$ (in other words this means that $F\setminus G$ is closed). Notice that although the relation $\subseteq^*$ is not transitive, if $F_0 \subseteq F_1 \subseteq^* F_2$, then $F_0 \subseteq^* F_2$.

	Fix an enumeration $2^\omega = \{s_\alpha\colon \alpha<\mathfrak{c}\}$. Let $\mathcal{U}$ be the family of clopen subsets of $2^\omega$. Let $\lambda$ be the Lebesgue measure on $2^\omega$. We are going to construct inductively a sequence $(F_\alpha)_{\alpha<\mathfrak{c}}$ of closed subsets
of $2^\omega$ such that for every $\alpha<\mathfrak{c}$
	\begin{enumerate}
		\item $\lambda(F_\alpha)>0$,\label{measure}
		\item $F_\alpha \cap \{s_\xi\colon \xi\leq \alpha\} = \emptyset$,\label{nonseparable}
		\item if $\beta<\alpha$ then $F_\alpha \subseteq^* F_\beta$,\label{fiber}
		\item $\lambda\left(\bigcup_n F_{\alpha+n}\right)=1$.\label{pib}
	\end{enumerate}

	Let $\mathfrak{A}$ be the Boolean algebra of subsets of $2^\omega$ generated by $\mathcal{U}$  and by the family $\{F_\alpha\colon \alpha<\mathfrak{c}\}$. Let $K'=\stone(\mathfrak{A})$
be its Stone space. The measure $\lambda$ restricted to $\mathfrak{A}$ defines a Radon measure
$\mu$ on $K'$ (see Section \ref{preliminaries}).
Let $K\subseteq K'$ be the support of $\mu$, i.e.\
$K=K'\setminus \bigcup\{\widehat{A}\colon \lambda(A)=0\}$.

We shall first prove that the resulting space $K$ has the required properties.
Let $f\colon K \to 2^\omega = \stone(\mathcal{U})$ be the standard continuous mapping induced by clopens of $2^\omega$, i.e. \[ f(x) = \{C\colon C\in\mathcal{U},  C\in x\} \in \stone(\mathcal{U}).\]

\medskip

\noindent {\sc Claim.} The space $K$ supports a measure and is not separable.
\medskip

Obviously, $K$ supports the measure $\mu$.
Conditions (\ref{measure}) and (\ref{nonseparable}) imply that the space $K$ is
	non-separable. Indeed, given a family $\{x_n\colon n<\omega\}\sub \stone(\alg)$ put
$f(x_n)=t_n$ for every $n$. Then $\{t_n\colon n<\omega\}\cap F_\alpha=\emptyset$
for  $\alpha$ large enough so $\widehat{F_\alpha}\cap K\neq\emptyset$ contains no $x_n$.
\medskip

\noindent {\sc Claim.} Each fiber of $f$ is homeomorphic to an ordinal number $<\con$.
\medskip

Indeed, fix $t\in 2^\omega$ and take $\beta<\alpha$ such that $t\in F_\alpha \cap F_\beta$. Then, by property (\ref{fiber}) there is an open $U\ni t$ such that $F_\alpha \cap U \subseteq F_\beta$, which
	means that $f^{-1}[t] \cap \widehat{F_\alpha} \subseteq f^{-1}[t] \cap \widehat{F_\beta}$. 
It follows that the algebra of clopen subsets of $f^{-1}[t]$ is generated by a well-ordered chain
of length $<\con$ so $f^{-1}[t]$ is clearly homeomorphic to some ordinal number $<\con$.
\medskip

\noindent {\sc Claim.} $\pi\chi(K)=\omega$.
\medskip

Consider $x\in K$ and put
	$\gamma = \sup\{\alpha\colon x\in \widehat{F}_\alpha\}$.
	Then $\gamma<\mathfrak{c}$ by property (\ref{nonseparable}).
We claim that the  family
\[ \mathcal{P}_x = \{\widehat{F}_{\gamma+n}\cap \widehat{U}\colon n\in\omega, \ U\in\mathcal{U}\}, \]
forms a countable $\pi$-base at $x$.

Note first that whenever $t=f(x)\notin F_\eta$ for some $\eta<\con$ then, as $F_\eta$
is closed in $2^\omega$, there is $C\in\mathcal{U}$ such that $t\in C$ and $C\cap F_\eta=\emptyset$.
Then $x\in\widehat{C}$ and $\widehat{C}\cap \widehat{F}_\eta=\emptyset$. This remark implies that
the local base at $x\in K$ consists of sets of the form $\widehat{B}$, where
	\[ B = C\cap \left(F_{\xi_1}\cap \dots \cap F_{\xi_k}\right), \]
	and $C\in\mathcal{U}$, $\xi_i\le\gamma$ for every $i\leq l$ and $\lambda(B)>0$. By (\ref{pib}) there is $n$ such that $F_{\gamma+n} \cap B\cap f[K] \ne \emptyset$. Let $t\in F_{\gamma+n} \cap B\cap f[K]$.
Since $F_{\gamma+n} \subseteq^* F_{\xi_i}$ for $i\leq k$, there is $V\in
\mathcal{U}$ such that $t\in V\sub U$ and $F_{\gamma+n} \cap V \subseteq B$. Then $\widehat{F}_{\gamma+n} \cap \widehat{U} \subseteq \widehat{B}$ and $\widehat{F}_{\gamma+n}\cap \widehat{U} \in \mathcal{P}_x$, as required.
\medskip

To complete the proof,
we shall carry out the inductive construction. For $F_0$ take any closed set such that $\lambda(F_0)>0$ and $s_0 \notin F_0$. Assume that we have constructed $\{F_\xi\colon \xi<\alpha\}$.

Notice that for every $\xi<\alpha$ there is an increasing
	sequence $(E^\xi_n)_n$ of closed sets in $2^\omega$ such that
\begin{itemize}
	\item $\lim_n \lambda(E^\xi_n) = 1$,
	\item $E^\xi_n \subseteq^* F_\xi$.
\end{itemize}
	
	Indeed, for every $n$ there is a (possibly empty) closed set $K_n \subseteq 2^\omega\setminus F_\xi$ such that $\lambda(K_n)>\lambda(K\setminus F_\xi)-1/n$. We can assume
	that the sequence $(K_n)_n$ is increasing. Then $E^\xi_n = F_\xi \cup K_n$ is as desired, since $E^\xi_n \setminus F_\xi = K_n$ is closed.

	Now the family $\{E^\xi_n\colon \xi<\alpha, n\in\omega\}$ fulfils the assumptions of Lemma \ref{sequences}. Let $n$ be such that $\alpha = \gamma+n$ for some limit ordinal $\gamma$. Let $E$ be the set given by Lemma \ref{sequences} for
	$\varepsilon = 1/(n+1)$.
As $\mathrm{non}(\mathcal{N})=\mathfrak{c}$ by the Martin's axiom, the set
$\{s_\xi\colon \xi\le\alpha\}$ is of measure zero,  so we can find a closed set $F_\alpha$ such that  $F_\alpha \subseteq E\sm \{s_\xi\colon \xi\le\alpha\}$
and $\lambda(F_\alpha)>1-1/(n+1)$. It is then
enough to check condition (\ref{fiber}). Let $\beta<\alpha$ and $x\in F_\beta$. As $F_\alpha \subseteq E$, there is $k\in \omega$ such that $F_\alpha \subseteq E^\beta_k$. Hence, $F_\alpha \subseteq^* F_\beta$, and we are done.
\end{proof}

Using a weaker version of Martin's axiom we can construct an example of a space with slightly weaker properties. Recall that a measure $\mu$ on a space $K$ is \emph{countably determined} if there is a countable family $\mathcal{F}$ of closed subsets of $K$
such that
\[ \mu(U) = \sup\{\mu(F)\colon F\in \mathcal{F}, F\subseteq U\} \]
for every open $U\subseteq K$.
Clearly, if $K$ supports a countably determined measure, then it is separable. Also, separable spaces support countably determined measures (since purely atomic measures are countably determined).
On the other hand, $2^\con$ is separable but the product measure on $2^\con$ is not countably determined.

Recall that the assertion $\mathfrak{p}=\mathfrak{c}$, which is equivalent to
$\mathsf{MA}(\sigma$-centered), that is Martin's axiom for $\sigma$-centred
 posets, says that every family $\mathcal{A}$ of subsets of $\omega$ of size less than
$\mathfrak{c}$ such that every finite intersection of elements of $\mathcal{A}$ is infinite has an infinite pseudo-intersection $P$ (i.e. $P\setminus A$ is finite for each $A\in \mathcal{A}$).

\begin{thm}  \label{countably-determined}
	Assume $\mathfrak{p}=\mathfrak{c}$. There is a compact space $K$ supporting a measure which is not countably determined and a continuous mapping $f\colon K\to 2^\omega$ such that each fiber of $f$ is homeomorphic to an ordinal number (so, in particular,  $K$ does not map continuously onto $[0,1]^{\omega_1}$).
\end{thm}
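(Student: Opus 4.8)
\noindent\emph{Proof proposal.}

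The plan is to repeat the construction in the proof of Theorem~\ref{main} almost verbatim, replacing the two uses of full $\mathsf{MA}$ by what $\mathfrak p=\mathfrak c$ provides, and adding one extra clause to the inductive construction that will force $\mu$ not to be countably determined. Thus I again build a sequence $(F_\alpha)_{\alpha<\mathfrak c}$ of closed subsets of $2^\omega$ with $\lambda(F_\alpha)>0$ and $F_\alpha\subseteq^* F_\beta$ for $\beta<\alpha$, let $\mathfrak A$ be the algebra of subsets of $2^\omega$ generated by the clopen sets together with $\{F_\alpha\colon\alpha<\mathfrak c\}$, and put $K=\stone(\mathfrak A)\setminus\bigcup\{\widehat A\colon\lambda(A)=0\}$, $\mu=\widehat\lambda\har K$, with $f\colon K\to 2^\omega$ induced by the clopens. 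The arguments that $\mu$ supports $K$, that each fibre $f^{-1}[t]$ has a clopen algebra generated by a well-ordered decreasing chain and hence is homeomorphic to an ordinal, and therefore --- by the Proposition recalled above together with the fact that a scattered compactum does not map onto $[0,1]$ --- that $K$ does not map continuously onto $[0,1]^{\omega_1}$, are literally those of Theorem~\ref{main}; they use only clause~\ref{fiber} (one loses only the bound $<\mathfrak c$ on the length of the chain, which came from clause~\ref{nonseparable}).

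Full $\mathsf{MA}$ entered the proof of Theorem~\ref{main} through $\mathrm{non}(\mathcal N)=\mathfrak c$, which gave clause~\ref{nonseparable} and hence non-separability, and through Lemma~\ref{sequences}, which gave clause~\ref{pib} and hence countable $\pi$-character; I simply drop clauses~\ref{nonseparable} and~\ref{pib} --- this is why the space will be only ``close to'' non-separable and why countable $\pi$-character is lost. The inductive step then needs only the following $\sigma$-centred substitute for Lemma~\ref{sequences}: \emph{assuming $\mathfrak p=\mathfrak c$, if $(F_\xi)_{\xi<\alpha}$ with $\alpha<\mathfrak c$ is a $\subseteq^*$-decreasing family of closed sets of positive measure, then there is a closed set $F_\alpha$ with $\lambda(F_\alpha)>0$ and $F_\alpha\subseteq^* F_\xi$ for all $\xi<\alpha$.} For its proof one fixes, exactly as in the inductive step of Theorem~\ref{main}, increasing closed sets $E^\xi_i\subseteq^* F_\xi$ with $\lambda(2^\omega\setminus E^\xi_i)<2^{-i}$ and forces with finite approximations: conditions are pairs consisting of a finite subtree of $2^{<\omega}$ (a clopen over-approximation of $F_\alpha$ carrying a density requirement) together with finitely many commitments ``$F_\alpha$ is disjoint from a tail of $(2^\omega\setminus E^\xi_i)_i$''. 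Conditions with the same tree are compatible, because finitely many such commitments cost only an arbitrarily small amount of measure, so the poset is $\sigma$-centred; a filter meeting the countably many ``height $\ge n$'' dense sets and the $<\mathfrak c$ many ``$\xi$ is committed'' dense sets yields $F_\alpha$. Keeping the cumulative measure loss under control (so that $\lambda(F_\alpha)>0$, and ideally $\lambda(F_\alpha)\ge\delta_0$ for a fixed $\delta_0$) is the delicate point here --- and, in contrast with full $\mathsf{MA}$, one should not expect anything like the estimate $\lambda(F_\alpha)>1-\varepsilon$ of Lemma~\ref{sequences}.

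The genuinely new part, which I expect to be the main obstacle, is forcing $\mu$ to fail countable determinacy. If $K$ turns out to be non-separable we are done, since (as is easily checked) a compactum supporting a countably determined measure is separable; but since we cannot guarantee non-separability we argue directly. Suppose $\{D_k\colon k\in\omega\}$ witnesses that $\mu$ is countably determined. Applying the defining equality to the clopen sets $\widehat{F_\alpha}\cap K$, for each $\alpha<\mathfrak c$ and each $n$ choose $k(\alpha,n)$ with $D_{k(\alpha,n)}\subseteq\widehat{F_\alpha}$ and $\mu(D_{k(\alpha,n)})>\lambda(F_\alpha)-1/n$; since $\alpha\mapsto k(\alpha,n)$ maps $\mathfrak c$ into $\omega$, there are for each $n$ a fixed $k_n$ and an uncountable $T_n\subseteq\mathfrak c$ with $D_{k_n}\subseteq\bigcap_{\alpha\in T_n}\widehat{F_\alpha}$ and $\mu(D_{k_n})>\lambda(F_\alpha)-1/n$ for all $\alpha\in T_n$. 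Since $K$ has full $\widehat\lambda$-measure, this yields
\[
\lambda\Bigl(\bigwedge_{\alpha\in T_n}[F_\alpha]\Bigr)\;\ge\;\widehat\lambda\Bigl(\bigcap_{\alpha\in T_n}\widehat{F_\alpha}\Bigr)\;\ge\;\mu(D_{k_n})\;\ge\;\sup_{\alpha\in T_n}\lambda(F_\alpha)-\tfrac1n ,
\]
the infimum being taken in the measure algebra $\mathfrak A_\omega$. Hence the extra clause to maintain in the construction of $(F_\alpha)$ must preclude this: it should say, roughly, that $(F_\alpha)_{\alpha<\mathfrak c}$ is \emph{Luzin-like}, i.e.\ $\bigwedge_{\alpha\in T}[F_\alpha]=0$ in $\mathfrak A_\omega$ for every uncountable $T\subseteq\mathfrak c$ (equivalently, every uncountable subfamily of $\{F_\alpha\}$ has finite sub-intersections of measure tending to $0$) --- together with $\lambda(F_\alpha)$ bounded below, so that for $n$ large enough the displayed inequality is contradicted. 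The crux, and where I would expect to spend most of the work, is to show that such a clause can be carried through the $\sigma$-centred inductive step alongside the requirements $F_\alpha\subseteq^* F_\xi$ ($\xi<\alpha$) and $\lambda(F_\alpha)\ge\delta_0$: one must choose $F_\alpha$ ``as spread out as possible'' in the separable algebra $\mathfrak A_\omega$ while still obeying the $\subseteq^*$-constraints, and verify that the corresponding diagonalisation tasks remain dense in the forcing. If instead the weaker $\sigma$-centred extension only delivers $\lambda(F_\alpha)>0$ with no uniform lower bound, one has to work harder: the pigeonholing must then be carried out not over all clopens $\widehat{F_\alpha}$ but over a subfamily on which the measures are bounded away from $0$, with the Luzin-like clause restricted accordingly.
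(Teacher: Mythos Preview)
Your plan has two acknowledged gaps --- the $\sigma$-centred inductive step with controlled measure, and the Luzin-type clause --- and neither is actually carried out; as written this is a sketch, not a proof. In particular, your claim that ``conditions with the same tree are compatible, because finitely many such commitments cost only an arbitrarily small amount of measure'' is suspect: once a condition has committed to specific indices $i$ (hence to specific $E^\xi_i$, possibly of small measure), combining two such conditions combines the costs, and the fact that one \emph{could} have chosen large $i$'s is irrelevant. And even granting the basic step, you give no argument that the Luzin-like diagonalisation can be interleaved with the $\subseteq^*$ and measure requirements using only $\mathfrak p=\mathfrak c$.

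The paper avoids both difficulties by an idea you did not find: it does \emph{not} perform the inductive step unconditionally. Fix an enumeration $(H_\alpha)_{\alpha<\mathfrak c}$ of all closed subsets of $2^\omega$ of positive measure and add the single extra clause $F_\alpha\subseteq H_\alpha$. Build the chain as long as possible; at each stage $\kappa$ form $K$ and $\mu$ as usual. If $\mu$ is not countably determined, stop --- done. If it \emph{is}, let $\mathcal D$ be a countable determining family (closed under finite unions and intersections) and set $\mathcal E=\{f[D]:D\in\mathcal D\}$. The point is that now, for each $\xi<\kappa$, one can choose the approximating sequences $(E^\xi_n)_n$ \emph{from the fixed countable family} $\mathcal E$ (using that $\widehat{F_\xi}$ and $K\setminus f^{-1}[F_\xi]$ are open of total measure $1$). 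Writing $\mathcal E=\{E_n:n<\omega\}$, the sets $T_\xi=\{n:\exists k\ E_n\subseteq E^\xi_k\}$ and $N_k=\{n:\lambda(E_n)>1-1/k\}$ are subsets of $\omega$ with the finite-intersection property, so $\mathfrak p=\mathfrak c$ gives an infinite pseudo-intersection $T$; a suitable finite sub-intersection $F=\bigcap_{n\in T_0}E_n$ has $\lambda(F)>1-\lambda(H_\kappa)$ and $F\subseteq^* F_\xi$ for all $\xi$, so $F_\kappa:=F\cap H_\kappa$ extends the chain. Finally, the construction cannot reach length $\mathfrak c$ with $\mu$ still countably determined, since then every closed $H\subseteq 2^\omega$ of positive measure would contain a member of the countable family $\mathcal E$ --- absurd. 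So no Luzin clause, no uniform lower bound on $\lambda(F_\alpha)$, and the use of $\mathfrak p=\mathfrak c$ is the bare statement about pseudo-intersections of subsets of $\omega$.
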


\begin{proof}

Let $\lambda$ be the usual measure on $2^\omega$.	
Fix an enumeration $(H_\alpha)_{\alpha<\con}$ of all closed subsets of $2^\omega$ with
$\lambda(H_\alpha)>0$.

Consider $\kappa\le\con$ and  a sequence $(F_\alpha)_{\alpha<\kappa}$ of closed subsets of $2^\omega$ such that for every $\alpha<\kappa$
	\begin{enumerate}
		\item $\lambda(F_\alpha)>0$,\label{w1}
		\item if $\beta<\alpha$ then $F_\alpha \subseteq^* F_\beta$,\label{w2}
        \item $F_\alpha\sub H_\alpha$. \label{w2.5}
	\end{enumerate}

Here we follow the notation used in the proof of
	Theorem \ref{main}.

We again let $\alg$ be the Boolean algebra generated by
$\{F_\alpha\colon \alpha<\kappa\}$ and the family $\mathcal{U}$ of clopen subsets of $2^\omega$. Let $\mu$ be the measure on $K$
uniquely determined by $\mu(K\cap\widehat{A})=\lambda(A)$ for $A\in\alg$.
\medskip

Assume that $\mu$ is countably determined and let $\mathcal{D}$ be the countable family of closed subsets of $K$ determining the measure $\mu$. We can assume that $\mathcal{D}$ is closed under finite unions and intersections.
Let $\mathcal{E} = \{f[D]\colon D\in \mathcal{D}\}$.
\medskip

\noindent{\sc Claim A}. $\kappa<\con$.

Indeed, otherwise $\mathcal{E}$ is a countable family of closed subsets of $2^\omega$ and
every closed $H\subseteq 2^\omega$ with $\lambda(H)>0$ contains some $E\in\mathcal{E}$, but this is plainly
impossible.
\medskip

\noindent{\sc Claim B}.
There is a closed set $F_\kappa\subseteq H_\kappa$ such that $\lambda(F_\kappa)>0$ and $F_\kappa\subseteq^* F_\alpha$ for every $\alpha<\kappa$.
\medskip

To verify Claim B we first prove the following.
\medskip

\noindent {\sc Claim C.} For every $\xi<\kappa$ there is  a sequence $(E^\xi_n)_n$ of elements of $\mathcal{E}$ such that
\begin{enumerate}[(i)]
	\item $\lim_n \lambda(E^\xi_n) = 1$, \label{w3}
	\item $E^\xi_n \subseteq^* F_\xi$. \label{w4}
\end{enumerate}
\medskip

Indeed, fix $\xi<\kappa$.
Notice that both $\widehat{F_\xi}\cap K$ and $ K\setminus f^{-1}[F_\xi]$ are open subsets of $K$. Moreover
\[\mu\left(\widehat{F}_\xi \cup (K\setminus f^{-1}[F_\xi])\right)=1.\]
Therefore for any $\eps>0$ we can find $D_1,D_2\in\DD$ such that
$D_1\sub \widehat{F}_\xi$, $D_2\sub K\setminus f^{-1}[F_\xi]$ and $\mu(D_1\cup D_2)>1-\eps$.
Then $E=f[D_1\cup D_2]\in \EE$, $\lambda(E)>1-\eps$  and $E\sub^* F_\xi$.

Using this observation we can define $E^\xi_n$ satisfying (\ref{w3}) and (\ref{w4}) of Claim C.
\medskip

Now we can prove Claim B.
Let us fix an  enumeration $\mathcal{E} = \{E_n\colon n\in\omega\}$.
For $\xi<\alpha$ and $k>0$ let
\[ T_\xi = \{n\in\omega\colon  \exists k\ E_n\subseteq E^\xi_k\},\quad
N_k=\{n\in\omega\colon \lambda(E_n)>1-1/k\}.\]

Notice that if $I\subseteq \kappa$ is finite, then for every $k$
\[|\bigcap_{\xi\in I}T_\xi\cap N_k| = \omega,\]
since $\mathcal{D}$  is closed under finite intersections and, by (\ref{w3}),
\[\lim_n \lambda\left(\bigcap_{\xi\in I} E^\xi_n\right)=1.\]

Now, since $\mathfrak{p}=\mathfrak{c}$, there is an infinite $T$ such that $T\setminus T_\xi$ is finite for every $\xi<\alpha$ and $T\setminus N_k$ is finite for every $k$. In particular, $\lambda(E_n)\to 1$ for $n\in T$
so we can pick $T_0\sub T$ with the property that
\[ \lambda\left(F\right)> 1-\lambda(H_\kappa), \quad \mbox{where}\quad
 F = \bigcap_{n\in T_0}  E_n.\]
  Now for every $\xi<\kappa$ there is $k$ such that $F\subseteq E^\xi_k$.
 Since $E^\xi_k\sub^* F_\xi$ we get $F\sub^* F_\xi$.
 We put $F_\kappa=F\cap H_\kappa$. Then $\lambda(F_\kappa)>0$ and
 $F_\kappa\sub F\sub^* F_\xi$ for every $\xi<\kappa$, as required.

Claim A and B imply that there must be $\kappa<\con$ such that $\mu$ is not countably determined,
and the proof is complete.
\end{proof}

It is not clear for us if
the space $K$ from Theorem \ref{countably-determined} can be constructed without additional set-theoretic  assumptions.
There is, however, a $\mathsf{ZFC}$ example of
a compact space $K$ which cannot be mapped continuously onto $[0,1]^{\omega_1}$ and which supports a measure which is  not strongly countably determined. Recall that a measure is \emph{strongly countably determined} if there is a countable family of
closed $G_\delta$ sets $\mathcal{F}$ such that
\[ \mu(U) = \sup\{\mu(F)\colon F\in \mathcal{F}, \ F\subseteq U\}.\]
Indeed, the space constructed in \cite{Bell} is separable (hence, it supports a measure), it does not map continuously onto $[0,1]^{\omega_1}$ and it does not have a countable $\pi$-base (and thus it cannot support a strongly countably determined measure).

\section{Measures on spaces of small $\pi$-weight.} \label{pi-weight}

As we have mentioned in the introduction, under $\mathsf{MA}_{\omega_1}$ every space with $\pi$-weight not exceeding $\omega_1$ is separable. In this short section we prove that for spaces supporting a measure we can relax the set-theoretic assumption.

Recall that countably
determined measures were defined in the previous section.
The following fact is an immediate corollary of \cite[Corollary 32H]{Fremlin}.

\begin{prop} \label{frelin-omega_1}
 Assume $\mathsf{MA}_{\omega_1}$. If $K$ is a compact space with $\pi(K)\le \omega_1$, and $K$ supports a measure of countable Maharam type, then each measure supported by $K$ is countably determined (and, consequently, $K$ is separable).
\end{prop}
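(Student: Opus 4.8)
The plan is to reduce the statement to the cited result \cite[Corollary 32H]{Fremlin}, which (under $\MA_{\omega_1}$) gives countable determinacy of a measure whose Maharam type is countable provided the measure algebra is generated by at most $\omega_1$ elements — or, more precisely, provided one can find a suitable family of $\omega_1$ closed sets playing the role of the $\pi$-base. So the first step is to fix a Radon measure $\nu$ on $K$ and a $\pi$-base $\mathcal{P}$ of $K$ with $|\mathcal P|\le\omega_1$. Since $K$ supports \emph{some} measure of countable Maharam type, $K$ is ccc, and we first observe that every Radon measure on $K$ automatically has countable Maharam type: indeed, the Maharam type of a Radon measure on a compact space $K$ is bounded by (in fact, for the purpose here, controlled by) $\pi(K)$ together with the ccc-ness, and more to the point, a measure of uncountable Maharam type would produce a continuous image $2^{\omega_1}$ of a closed subspace of the support, contradicting $\pi(K)\le\omega_1$ under $\MA_{\omega_1}$; alternatively one cites the standard fact that a strictly positive measure on a space of $\pi$-weight $\le\omega_1$ that admits a type-$\omega$ strictly positive measure forces every Radon measure on $K$ to have type $\le\omega_1$, and then type exactly $\omega$ by the Maharam decomposition together with separability considerations. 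I would state this cleanly as a preliminary reduction: \emph{it suffices to treat a Radon measure $\nu$ on $K$ whose Maharam type is countable.}

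Granted that, the core step is to apply \cite[Corollary 32H]{Fremlin} directly. That result is formulated for measures on sets/algebras with a generating family of size $\omega_1$; here the translation is via the Stone-space picture from Section \ref{preliminaries}. Concretely, since $\nu$ is Radon with countable Maharam type, by the Maharam structure theorem $\mathfrak A(\nu)$ embeds into $\mathfrak A_\omega$, so there is a countable family $\mathcal B_0$ of Borel sets approximating $\nu$ in symmetric difference; combining $\mathcal B_0$ with (the closures/interiors of) a $\pi$-base of size $\le\omega_1$ yields a family of $\le\omega_1$ Borel (indeed closed) sets whose generated algebra is $\nu$-dense in $\mathrm{Bor}(K)$ and which witnesses the hypotheses of Corollary 32H. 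Feeding this family into \cite[Corollary 32H]{Fremlin} under $\MA_{\omega_1}$ produces a countable subfamily $\mathcal F$ of closed sets with
\[ \nu(U)=\sup\{\nu(F)\colon F\in\mathcal F,\ F\subseteq U\}\]
for every open $U\subseteq K$, i.e.\ $\nu$ is countably determined.

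Finally, the ``consequently'' clause: once \emph{every} measure on $K$ is countably determined, in particular the strictly positive one $\mu$ of countable Maharam type with which we started is countably determined; but a space supporting a countably determined measure is separable — given the countable determining family $\mathcal F$, pick for each nonempty $F\in\mathcal F$ a point $x_F\in F$ and check that $\{x_F\colon F\in\mathcal F,\ F\neq\emptyset\}$ is dense, since any nonempty open $U$ has $\mu(U)>0$, hence contains some $F\in\mathcal F$ with $\mu(F)>0$, hence nonempty, so $x_F\in U$. The main obstacle I anticipate is purely bookkeeping: matching the exact hypotheses of \cite[Corollary 32H]{Fremlin} (which is phrased in Fremlin's measure-algebra language) to the present topological setup, in particular verifying that ``$\pi$-weight $\le\omega_1$ plus a strictly positive measure of type $\omega$'' is enough to get the generating family of size $\omega_1$ in the right form — everything else is a direct citation plus the two-line density argument above. $\bs$
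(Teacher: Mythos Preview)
The paper offers no proof beyond the sentence that the statement is ``an immediate corollary of \cite[Corollary 32H]{Fremlin}'', so there is little to compare against except the bare citation. Your steps 2 and 3 are a reasonable unpacking of how that citation is meant to be applied, and the density argument in your final paragraph is exactly the two lines the parenthetical ``consequently, $K$ is separable'' intends.

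Your step 1, however, has a genuine gap. You claim that every strictly positive Radon measure on such a $K$ must have countable Maharam type, but neither sketch you offer establishes this. Haydon's theorem produces a continuous surjection onto $[0,1]^{\kappa}$ only from a measure of Maharam type at least $\con^+$ (compare the paper's own use of it in the proof of Lemma~\ref{noncov}); a measure of type $\omega_1$ gives no such map. And even if it did, $\pi(K)\le\omega_1$ would be no obstruction, since $\pi(2^{\omega_1})=\omega_1$. In fact $K=2^{\omega_1}$ satisfies all the hypotheses of the proposition --- it has $\pi$-weight $\omega_1$ and, being separable, supports a purely atomic strictly positive measure of countable type --- yet it also supports $\lambda_{\omega_1}$, which has type $\omega_1$; so the reduction you are trying to prove is false as stated. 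The natural reading of the proposition (and the one that makes it an \emph{immediate} corollary of 32H) is that ``each measure supported by $K$'' refers back to strictly positive measures of countable Maharam type; under that reading your step 1 is simply unnecessary and the proof is just your steps 2 and 3.
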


In particular, under $\mathsf{MA}_{\omega_1}$, if $K$ does not map continuously onto $[0,1]^{\omega_1}$ and $\pi(K)\le\omega_1$  then every measure supported by $K$ is countably determined. The assumption on $\pi$-weight is essential here as it was demonstrated in Theorem \ref{main}.

We shall prove a result similar to Proposition \ref{frelin-omega_1} assuming
Martin's axiom for measure algebras.
It will be convenient to recall several formulations of such an axiom.
The following fact  is a combination of results due to Fremlin and Cicho\'n,
see \cite[525J]{FremlinMT} and \cite[Section 4]{DzPl04} for details.

\begin{thm}\label{cifr}
The following are equivalent

\begin{enumerate}[(i)]
\item $\cov(\NN_{\omega_1})>\omega_1$,
\item $\omega_1$ is a precaliber of measure algebras,
\item $\omega_1$ is a caliber of Radon measures.
\end{enumerate}
\end{thm}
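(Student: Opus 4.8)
The plan is to prove the cycle of implications $(i)\Rightarrow(ii)\Rightarrow(iii)\Rightarrow(i)$. The implications $(ii)\Rightarrow(iii)$, $(iii)\Rightarrow(ii)$ and $(iii)\Rightarrow(i)$ are ``formal'', relying only on Stone duality, inner regularity of Radon measures, compactness, and the Maharam structure theorem recalled in Section~\ref{preliminaries}; all of the genuine content is concentrated in $(i)\Rightarrow(ii)$, where the results of Cicho\'n and Fremlin are used, and which I sketch at the end.

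\emph{The formal implications.} For $(ii)\Rightarrow(iii)$, let $\mu$ be a Radon measure on a compact $K$ and let $(A_\xi)_{\xi<\omega_1}$ satisfy $\inf_\xi\mu(A_\xi)>0$; by inner regularity pick compact $K_\xi\sub A_\xi$ with $\mu(K_\xi)\ge\delta$ for a fixed $\delta>0$, and apply $(ii)$ to the measure algebra $\mathfrak A(\mu)$ and the classes $[K_\xi]$ to obtain an uncountable $I$ with $\mu\bigl(\bigcap_{\xi\in J}K_\xi\bigr)>0$ for every finite $J\sub I$; then $(K_\xi)_{\xi\in I}$ has the finite intersection property, so $\bigcap_{\xi\in I}K_\xi\ne\emptyset$ by compactness. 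For $(iii)\Rightarrow(ii)$, let $\mathfrak A$ be a measure algebra with measure $\nu$ and let $(a_\xi)_{\xi<\omega_1}$ be nonzero; reduce by pigeonhole to $\nu(a_\xi)\ge\delta$ for all $\xi$, let $S=\stone(\mathfrak A)$ carry the Radon extension $\widehat\nu$ of $\nu$, and apply $(iii)$ to the clopen sets $\widehat{a_\xi}$ (for which $\widehat\nu(\widehat{a_\xi})=\nu(a_\xi)\ge\delta$); for the resulting uncountable $I$ and any finite $J\sub I$, $\widehat{\bigwedge_{\xi\in J}a_\xi}=\bigcap_{\xi\in J}\widehat{a_\xi}\supseteq\bigcap_{\xi\in I}\widehat{a_\xi}\ne\emptyset$, so $\bigwedge_{\xi\in J}a_\xi\ne0$, i.e.\ $(a_\xi)_{\xi\in I}$ is centred. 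For $(iii)\Rightarrow(i)$, suppose $2^{\omega_1}=\bigcup_{\alpha<\omega_1}N_\alpha$ with each $N_\alpha$ $\lambda_{\omega_1}$-null; replacing $N_\alpha$ by $\bigcup_{\beta\le\alpha}N_\beta$ we may assume the sequence is increasing, so the complements $A_\alpha=2^{\omega_1}\sm N_\alpha$ have measure $1$. Applying $(iii)$ to the Radon measure $\lambda_{\omega_1}$ on the compact space $2^{\omega_1}$ we get an uncountable (hence cofinal) $I\sub\omega_1$ and a point $z\in\bigcap_{\alpha\in I}A_\alpha$; since the $A_\alpha$ decrease, $z$ avoids every $N_\alpha$, contradicting that they cover $2^{\omega_1}$. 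Hence $\cov(\NN_{\omega_1})>\omega_1$.

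\emph{$(i)\Rightarrow(ii)$, the core.} First, $(i)$ implies $\cov(\NN_\om)>\omega_1$, since a null cover of $2^\om$ pulls back along the projection $2^{\omega_1}\to2^\om$ to a null cover of $2^{\omega_1}$, whence $\cov(\NN_{\omega_1})\le\cov(\NN_\om)$. The crux is the lemma: \emph{if $\cov(\NN_\om)>\omega_1$, then for every family $(B_\xi)_{\xi<\omega_1}$ of positive-measure Borel subsets of $2^\om$ there are an uncountable $I\sub\omega_1$ and a single point $x\in2^\om$ that is a Lebesgue density point of $B_\xi$ for each $\xi\in I$.} To prove it, reduce by pigeonhole to $\lambda(B_\xi)\ge\delta$ and, replacing each $B_\xi$ by its (equal-measure) set of density points, assume every point of $B_\xi$ is a density point of $B_\xi$. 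In the product space $(2^\om)^\om$, whose null ideal has covering number $\cov(\NN_\om)$, the events ``$x^n\in B_\xi$'' ($n\in\om$) are independent of the same positive probability, so by the Borel--Cantelli lemma $E_\xi=\{\bar x:\{n:x^n\in B_\xi\}\text{ is finite}\}$ is null; since $\cov(\NN_\om)>\omega_1$, choose $\bar x=(x^n)_n\notin\bigcup_{\xi<\omega_1}E_\xi$. Then $\omega_1=\bigcup_{n\in\om}\{\xi:x^n\in B_\xi\}$, so some $\{\xi:x^{n_0}\in B_\xi\}$ is uncountable and $x=x^{n_0}$ works.

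\emph{From the lemma to $(ii)$.} If $x$ is a common density point of $(B_\xi)_{\xi\in I}$, then intersecting with a sufficiently small basic clopen around $x$ gives, by a routine estimate, $\lambda\bigl(\bigcap_{\xi\in J}B_\xi\bigr)>0$ for every finite $J\sub I$; so $\mathfrak A_\om$ has precaliber $\omega_1$. For $\mathfrak A_{\omega_1}$: represent $a_\xi$ (with $\nu(a_\xi)\ge\delta$) by a Borel $A_\xi\sub2^{\omega_1}$ depending on a countable coordinate set $C_\xi$, and use the $\Delta$-system lemma to arrange that the $C_\xi$ form a $\Delta$-system with countable root $R$ and pairwise disjoint petals $C_\xi\sm R$; then $g_\xi(r):=\lambda\bigl(A_\xi\mid x\har R=r\bigr)$ has $\int g_\xi\,d\lambda_R\ge\delta$, so $B_\xi:=\{g_\xi>0\}\sub 2^R$ has $\lambda_R$-measure $\ge\delta$, and the $\mathfrak A_\om$ case yields an uncountable $I$ with $\lambda_R\bigl(\bigcap_{\xi\in J}B_\xi\bigr)>0$ for all finite $J\sub I$. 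Because the petals are disjoint, conditioning on $x\har R$ renders the $A_\xi$ independent, so $\lambda_{\omega_1}\bigl(\bigcap_{\xi\in J}A_\xi\bigr)=\int\prod_{\xi\in J}g_\xi\,d\lambda_R>0$, i.e.\ $(a_\xi)_{\xi\in I}$ is centred. An arbitrary measure algebra reduces to $\mathfrak A_{\omega_1}$ by passing to the order-closed subalgebra generated by the given $\omega_1$ elements, which is a measure algebra of Maharam type $\le\omega_1$ and hence embeds into $\mathfrak A_{\omega_1}$; as this embedding preserves finite meets and their non-vanishing, and finite meets in a subalgebra agree with those in the ambient algebra, an uncountable centred subfamily found in $\mathfrak A_{\omega_1}$ pulls back. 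The main obstacle is precisely this chain of measure-theoretic steps: the Borel--Cantelli argument for the density-point lemma, the upgrade from a common density point to positive measure of all finite intersections, and the $\Delta$-system/conditional-independence transfer from $\mathfrak A_\om$ to $\mathfrak A_{\omega_1}$. Everything else is bookkeeping with Stone duality, compactness and Maharam's theorem.
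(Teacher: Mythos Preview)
The paper does not give its own proof of this theorem; it records it as a combination of results of Fremlin and Cicho\'n and refers the reader to \cite[525J]{FremlinMT} and \cite[Section 4]{DzPl04}. Your formal implications $(ii)\Leftrightarrow(iii)$ and $(iii)\Rightarrow(i)$ are correct, the density-point lemma showing that $\omega_1$ is a precaliber of $\mathfrak A_\omega$ is correct, and the reduction of an arbitrary measure algebra to $\mathfrak A_{\omega_1}$ via Maharam is fine.

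The genuine gap is the $\Delta$-system step in your transfer from $\mathfrak A_\omega$ to $\mathfrak A_{\omega_1}$. The $\Delta$-system lemma for $\omega_1$ many \emph{countable} sets is false in $\mathsf{ZFC}$: taking $C_\xi=\xi$ (the set of ordinals below $\xi$) one has $C_\xi\subseteq C_\eta$ whenever $\xi<\eta$, so no three of these sets form a $\Delta$-system, let alone uncountably many. Nothing in the hypothesis $\cov(\NN_{\omega_1})>\omega_1$ supplies a substitute. Since your conditional-independence computation requires pairwise disjoint petals over a single countable root $R$, the argument cannot be started. One way to close the gap is to abandon the reduction to $\mathfrak A_\omega$ and run the Borel--Cantelli argument directly for $\mathfrak A_{\omega_1}$: on the Stone space $S=\stone(\mathfrak A_{\omega_1})$ the canonical Radon measure $\widehat\nu$ has measure algebra $\mathfrak A_{\omega_1}$, and so does the product $\widehat\nu^{\,\omega}$ on $S^\omega$; hence by Theorem~\ref{fremlin} one has $\cov(\widehat\nu^{\,\omega})=\cov(\NN_{\omega_1})>\omega_1$, a point $\bar x\in S^\omega$ may be chosen avoiding all $\omega_1$ Borel--Cantelli null sets, and some coordinate $x^{n_0}$ (an ultrafilter on $\mathfrak A_{\omega_1}$) then contains uncountably many $a_\xi$, which are therefore centred.
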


Here we say that $\omega_1$ is a caliber of Radon measures if for any such a measure
$\mu$ on a compact space $K$ and any family $\{B_\alpha\colon \alpha<\omega\}$ of Borel sets
of positive measure there is $x\in K$ such that $x\in B_\alpha$ for uncountably many $\alpha$.

\begin{cor}\label{cifr2}
If $\cov(\NN_{\omega_1})>\omega_1$ then $\cov(\mu)>\omega_1$ for every
Radon measure $\mu$ defined on some compact space $K$.
\end{cor}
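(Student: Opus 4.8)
The plan is to derive Corollary~\ref{cifr2} as a direct consequence of Theorem~\ref{cifr}, specifically using the equivalence of (i) with the ``caliber of Radon measures'' formulation~(iii). So the first step is to assume $\cov(\NN_{\omega_1})>\omega_1$ and invoke Theorem~\ref{cifr} to conclude that $\omega_1$ is a caliber of every Radon measure. Then, fixing a Radon measure $\mu$ on a compact space $K$, I would argue by contradiction: suppose $\cov(\mu)\le\omega_1$, so that there is a family $\{N_\alpha\colon\alpha<\omega_1\}$ of $\mu$-null sets (which we may take to be Borel, since $\mu$ is Radon and hence its null ideal is generated by Borel — indeed even compact-complemented — null sets) with $\bigcup_{\alpha<\omega_1}N_\alpha=K$.

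The key manoeuvre is to pass to complements. Put $B_\alpha = K\setminus N_\alpha$; each $B_\alpha$ is a Borel set of full measure, in particular of positive measure. By the caliber property there is a point $x\in K$ lying in uncountably many of the $B_\alpha$, hence in some $B_\alpha$ — but then $x\notin N_\alpha$ for that $\alpha$, which does not yet contradict $\bigcup_\alpha N_\alpha = K$. So I need the slightly stronger observation: a point $x$ that lies in $B_\alpha$ for \emph{all but countably many} $\alpha$ would still be covered by one of the remaining countably many $N_\beta$, so this is not immediately a contradiction either. The right fix is to note that the caliber statement as quoted only gives ``uncountably many'', which is not enough; instead I should apply the caliber property to a cofinal-in-$\omega_1$ \emph{re-indexing}. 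Concretely, if $\bigcup_{\alpha<\omega_1}N_\alpha=K$ then we may assume the $N_\alpha$ are increasing (replace $N_\alpha$ by $\bigcup_{\beta\le\alpha}N_\beta$, still $\mu$-null by countable additivity), so $B_\alpha = K\setminus N_\alpha$ is \emph{decreasing} with empty intersection $\bigcap_\alpha B_\alpha=\emptyset$. A point $x$ belonging to uncountably many $B_\alpha$ then belongs to cofinally many of them, hence to all of them by monotonicity, contradicting $\bigcap_\alpha B_\alpha=\emptyset$. This is the crux of the argument.

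Thus the proof structure is: (1) assume $\cov(\NN_{\omega_1})>\omega_1$; (2) fix a Radon $\mu$ on compact $K$ and suppose toward a contradiction that $\cov(\mu)\le\omega_1$, witnessed by an increasing $\omega_1$-chain of Borel null sets covering $K$; (3) take complements to get a decreasing $\omega_1$-chain of positive-measure Borel sets with empty intersection; (4) apply Theorem~\ref{cifr}(iii) to get a point in uncountably many, hence cofinally many, hence all of them, which is absurd; (5) conclude $\cov(\mu)>\omega_1$. One should also dispatch the trivial lower bound $\cov(\mu)\ge\omega_1$ (i.e.\ $K$ is not a countable union of null sets), which holds automatically because $\mu(K)>0$ and $\mu$ is a finite measure, so in particular $\cov(\mu)$ is a well-defined uncountable cardinal.

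The main obstacle, and the only genuinely delicate point, is the reduction from ``uncountably many'' to ``all'' in step~(4): this is exactly why one must first arrange the chain $(N_\alpha)_{\alpha<\omega_1}$ to be increasing, so that the complements form a decreasing chain and ``cofinally many'' upgrades to ``all''. Everything else — that a Radon null ideal is generated by Borel null sets, that a countable increasing union of null sets is null, that finite measures do not vanish on the whole space — is entirely routine. I would present the argument in four or five lines, citing Theorem~\ref{cifr} for the caliber property and leaving these standard measure-theoretic facts unremarked.
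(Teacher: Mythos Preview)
Your proof is correct and follows essentially the same route as the paper: both argue by contradiction via Theorem~\ref{cifr}(iii), constructing from a putative null cover $\{N_\alpha\colon\alpha<\omega_1\}$ of $K$ an $\omega_1$-family of positive-measure sets that no point meets uncountably often. The only cosmetic difference is that the paper picks $F_\alpha\subseteq K\setminus\bigcup_{\beta<\alpha}N_\beta$ with $\mu(F_\alpha)>0$ directly, whereas you first replace the $N_\alpha$ by their initial unions to make the chain increasing and then take the full complements $B_\alpha=K\setminus N_\alpha$; these amount to the same family up to the choice of Borel representatives.
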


\begin{proof}
Otherwise, $K=\bigcup_{\alpha<\omega_1} N_\alpha$ where $\mu(N_\alpha)=0$. Then
there are $F_\alpha\sub K\setminus \bigcup_{\beta<\alpha} N_\alpha$ with $\mu(F_\alpha)>0$
for $\alpha<\omega_1$, and the family $\{F_\alpha\colon \alpha<\omega_1\}$ witnesses that
$\omega_1$ is not a caliber of $\mu$.
\end{proof}

The next theorem is a slight generalization of \cite[Lemma 3.6]{Kamburelis},
where the result was proved only for Boolean spaces under a stronger assumption
 on  weight rather than  $\pi$-weight.
The proof given below is a modification of the proof of \cite[Theorem 1.3]{Juhasz}.

\begin{thm} \label{kamburelis}
	Assume $\cov(\NN_{\omega_1})>\omega_1$. If $K$ is a compact space supporting a measure and $\pi(K) \leq \omega_1$, then $K$ is separable.
\end{thm}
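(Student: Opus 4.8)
The plan is to follow the strategy of the Hajnal--Juhász argument (as in \cite[Theorem 1.3]{Juhasz}) but to use the measure $\mu$ supported by $K$ as a bookkeeping device in place of a general $\pi$-base, thereby replacing the hypothesis $\mathsf{MA}_{\omega_1}$ by $\cov(\NN_{\omega_1})>\omega_1$. Fix a strictly positive Radon measure $\mu$ on $K$ and fix a $\pi$-base $\mathcal{P}=\{P_\alpha\colon \alpha<\omega_1\}$ of $K$ consisting of nonempty open sets. We may assume $\mu(K)=1$. Suppose toward a contradiction that $K$ is not separable. The goal is to build, by transfinite recursion of length $\omega_1$, a sequence of points $x_\alpha\in K$ together with open neighbourhoods, in such a way that the corresponding closures (or a suitable family of Borel sets of positive measure derived from them) form an uncountable family with the finite intersection property or, more precisely, an uncountable family of positive-measure Borel sets to which we can apply caliber. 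The contradiction will come from Theorem \ref{cifr}(iii): $\omega_1$ is a caliber of Radon measures, so any $\omega_1$-sized family of positive-measure Borel sets has a point lying in uncountably many of them, and this will clash with the way the neighbourhoods were chosen to shrink.

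\textbf{The recursion.} At stage $\alpha<\omega_1$, having chosen $\{x_\beta\colon\beta<\alpha\}$, the set $\overline{\{x_\beta\colon\beta<\alpha\}}$ is not all of $K$ (by non-separability, at least for cofinally many $\alpha$; in fact we can arrange it is proper at every stage by a standard argument), so there is a basic open set, hence some $P_{\gamma(\alpha)}\in\mathcal{P}$, disjoint from that closure. Pick $x_\alpha\in P_{\gamma(\alpha)}$. Simultaneously we record a shrinking family of open sets around each $x_\alpha$: using that $\mu$ is Radon and strictly positive, choose for each $\alpha$ a decreasing sequence of open sets $U^\alpha_n\ni x_\alpha$ with $\bigcap_n U^\alpha_n=\{x_\alpha\}$ is too strong in general, so instead we work with closed sets: choose closed $G_\alpha$ with $x_\alpha\in\wn G_\alpha$, $\mu(G_\alpha)>0$, and $G_\alpha$ small enough to be disjoint from $\{x_\beta\colon\beta<\alpha\}$ — this is where one uses that points have measure-theoretically controllable neighbourhoods. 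The precise combinatorial bookkeeping is to ensure that the family $\{G_\alpha\colon\alpha<\omega_1\}$, or a derived family, cannot have a point in uncountably many members while the $x_\alpha$'s are spread out.

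\textbf{Deriving the contradiction.} Apply Theorem \ref{cifr}: since $\cov(\NN_{\omega_1})>\omega_1$, the cardinal $\omega_1$ is a caliber of the Radon measure $\mu$, so there is $x^*\in K$ belonging to $G_\alpha$ for uncountably many $\alpha$ in some uncountable $S\subseteq\omega_1$. Since $G_\alpha$ was chosen disjoint from $\{x_\beta\colon\beta<\alpha\}$, for $\alpha<\alpha'$ both in $S$ we get $x_\alpha\notin G_{\alpha'}$; combined with $x^*\in G_\alpha\cap G_{\alpha'}$ and a suitable choice making the $G_\alpha$ genuinely localize around $x_\alpha$, one extracts that $x^*$ cannot equal any single $x_\alpha$ yet must accumulate against all of them, which together with the $\pi$-base structure (each $x_\alpha$ sits in its own $P_{\gamma(\alpha)}$ avoiding earlier points) forces the $\pi$-base to be uncountable near $x^*$ in a way compatible with $\pi(K)\le\omega_1$ only if $K$ was separable after all — the contradiction.

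\textbf{Main obstacle.} The delicate point is the exact form of the neighbourhoods $G_\alpha$: one needs them simultaneously to (a) have positive $\mu$-measure so that caliber applies, (b) be small enough around $x_\alpha$ that a single point $x^*$ lying in uncountably many of them yields a genuine topological contradiction, and (c) be definable from the countable data available at stage $\alpha$. In the classical Hajnal--Juhász proof the $\pi$-base of size $<\con$ under $\mathsf{MA}_{\omega_1}$ does this job via a ccc forcing / almost-disjointness argument; here the substitute is the caliber property of the measure, and the work is to see that caliber of Radon measures is exactly the amount of ``ccc-ness'' needed. I expect this matching — reproving the Juhász-style step with ``positive measure'' in place of ``nonempty interior'' and ``caliber'' in place of $\mathsf{MA}_{\omega_1}$ — to be the crux, and the rest to be routine once the right family of sets is identified. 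The hypothesis $\pi(K)\le\omega_1$ (rather than $w(K)\le\omega_1$, as in \cite{Kamburelis}) is what lets us take $\mathcal{P}$ of size $\omega_1$, and the proof must only ever use the $\pi$-base, never a genuine base, which is the reason for phrasing everything in terms of nonempty open sets meeting given closures.
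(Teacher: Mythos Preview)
Your proposal has a genuine gap at the contradiction step. You build sets $G_\alpha$ of positive measure with $x_\alpha\in\wn G_\alpha$ and $G_\alpha\cap\{x_\beta:\beta<\alpha\}=\emptyset$, then invoke caliber to find $x^*\in G_\alpha$ for $\alpha$ in some uncountable $S$. But from this nothing contradictory follows: a single point lying in uncountably many closed sets of positive measure, each a neighbourhood of a different $x_\alpha$, is perfectly consistent with $K$ being non-separable and with $\pi(K)\le\omega_1$. Your sentence ``forces the $\pi$-base to be uncountable near $x^*$'' is where the argument collapses---there is no mechanism that turns ``$x^*$ belongs to many $G_\alpha$'' into a statement about $\pi$-character or $\pi$-weight, and you yourself flag that choosing $G_\alpha$ correctly is ``the crux'' without saying how to do it. The difficulty is structural: caliber applied \emph{inside $K$} only produces a single point, whereas separability asks for a countable \emph{set}; no amount of bookkeeping on neighbourhoods of individually chosen points will bridge this.

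The paper's proof sidesteps the whole recursion by passing to the countable power $K^\omega$ with the product measure $\nu=\mu^\omega$. For each $P\in\mathcal P$ the set $D_P=\bigcap_n\pi_n^{-1}[K\setminus P]$ is $\nu$-null (since $\mu(K\setminus P)<1$), and there are only $\omega_1$ such sets. Now $\cov(\NN_{\omega_1})>\omega_1$ (via Corollary \ref{cifr2}) gives a single point $x\in K^\omega\setminus\bigcup_P D_P$, and \emph{that point is a countable sequence in $K$} which meets every $P\in\mathcal P$, hence is dense. This is the genuine Hajnal--Juh\'asz idea: the passage to $K^\omega$ converts ``avoid $\omega_1$ null sets'' into ``find a countable dense set''. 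Your outline claims to follow Hajnal--Juh\'asz but omits exactly this move, which is why the contradiction never materialises.
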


\begin{proof}
Let $\mu$ be a strictly positive measure on $K$.
 We work in the product space $K^\omega$, and denote by $\nu$ the
 product measure $\prod_{n\in\omega}\mu$, so that
 \[ \nu(B_0\times \dots \times B_n \times K \times K
	\dots) = \mu(B_0)\cdot \ldots \cdot \mu(B_n)\]
for all Borel rectangles.

 For every $P\in \mathcal{P}$ define
 \[ \mathcal{G}_P = \{\pi_n^{-1}[K\setminus P]\colon n\in\omega\}\quad \mbox{and}\quad
  D_P = \bigcap \mathcal{G}_P.\]

 Notice that ${\nu}(D_P)=0$ since $\mu (K\setminus P)<1$.
	
Our assumption and Theorem \ref{cifr}
implies that there is $x\in K^\omega$ such that $x\notin \bigcup_{P\in\mathcal{P}} D_P$.
The set $S=\{\pi_n(x)\colon n\in\omega\}$ is then dense in $K$.
Indeed, for an open $U\subseteq K$ there is $P\in \mathcal{P}$ such that $P\subseteq U$. As
$x\in K^\omega\setminus D_P$ there is $n\in\omega$ such that $\pi_n(x)\in P$.
\end{proof}

Let us also state a corollary to a result due to Todor\v{c}evi\'{c} \cite{To90}
(where the notion of free sequences of pairs and its connection with the tightness
is explained).

\begin{cor}
Assuming $\cov(\NN_{\omega_1})>\omega_1$
every countably tight compact space supporting a measure has a countable $\pi$-base.
\end{cor}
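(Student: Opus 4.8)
The plan is to squeeze out of the measure precisely the chain condition that Todor\v{c}evi\'c's theorem needs, and then to quote that theorem.

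First I would record what supporting a measure gives. Let $\mu$ be a strictly positive Radon measure on $K$, so that every nonempty open $U\subseteq K$ has $\mu(U)>0$. Since $\cov(\NN_{\omega_1})>\omega_1$, Theorem~\ref{cifr} tells us that $\omega_1$ is a caliber of $\mu$, and by strict positivity this is just the statement that $\omega_1$ is a caliber of $K$: every family $\{U_\alpha\colon\alpha<\omega_1\}$ of nonempty open sets has an uncountable subfamily with a common point. Thus $K$ is a compact space of caliber $\omega_1$. This is genuinely stronger than ccc, and it must be: a first-countable Suslinean space, as discussed in the introduction, is ccc and countably tight yet not separable, hence has no countable $\pi$-base, so one cannot run the argument with ccc alone, and caliber $\omega_1$ is exactly the strengthening that a measure supplies under our axiom.

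Next I would combine caliber $\omega_1$ with countable tightness through \cite{To90}. For a compact space the absence of a free sequence of pairs of length $\omega_1$ is a consequence of countable tightness, and Todor\v{c}evi\'c's analysis shows that a compact space of caliber $\omega_1$ with no free sequence of pairs of length $\omega_1$ must admit a countable $\pi$-base. Applying this to $K$ yields the corollary, the whole measure-theoretic input having been absorbed into the single clause ``$\omega_1$ is a caliber of $\mu$'' provided by Theorem~\ref{cifr}.

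The step I expect to be the real obstacle is the one internal to \cite{To90}: producing a long free sequence of pairs once the conclusion fails. Assuming $K$ has no countable $\pi$-base, one builds by recursion on $\alpha<\omega_1$ points $x_\alpha$ together with nonempty open ``obstruction'' sets $U_\alpha\ni x_\alpha$; at stage $\alpha$ the countably many open sets attached so far to $\{x_\beta\colon\beta<\alpha\}$ (say, a fixed countable local $\pi$-base at each $x_\beta$) fail to be a $\pi$-base of $K$, so some nonempty open $U_\alpha$ omits all of them, and one picks $x_\alpha\in U_\alpha$. Feeding $\{U_\alpha\colon\alpha<\omega_1\}$ to the caliber property yields a point $x$ lying in $U_\alpha$ for uncountably many $\alpha$; by the bookkeeping built into the $U_\alpha$, such an $x$ is a cluster point of an uncountable free subsequence of $(x_\alpha)$ that is clustered by no countable subcollection, that is, a free sequence of pairs of length $\omega_1$, contradicting countable tightness. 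The delicate part, and the reason the construction is organised around \emph{pairs} rather than ordinary free sequences, is that the ``tail'' half of the freeness condition refers to ordinals not yet chosen at stage $\alpha$; the auxiliary sets $U_\alpha$ are the device that lets one recover that half after the fact from the single application of the caliber, and verifying that a common point returned by the caliber really does produce the contradiction is the crux of the whole argument.
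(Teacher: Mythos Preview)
Your proposal is correct and follows essentially the same route as the paper: both extract caliber $\omega_1$ from Theorem~\ref{cifr} together with strict positivity of $\mu$, and then invoke \cite{To90} to pass from caliber plus countable tightness to a countable $\pi$-base. The paper is only more specific about the citation: it quotes \cite[Lemma~1]{To90}, which for \emph{any} compact $K$ supplies a family of pairs $\{(F_t,G_t):t\in T\}$ with $\{\mathrm{int}\,F_t:t\in T\}$ a $\pi$-base and with the property that a subfamily is free iff the corresponding $F_t$'s have a common point; one then applies the caliber directly to the $F_t$'s (which have positive measure since their interiors are nonempty) to force $|T|\le\omega$.

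One caution about your third paragraph, which is not part of your proof proper but is your guess at what happens inside \cite{To90}: the recursion you sketch attaches to each $x_\beta$ ``a fixed countable local $\pi$-base'', but countable tightness does not yield countable $\pi$-character, so this bookkeeping is not available. Todor\v{c}evi\'c's Lemma~1 does not proceed by such a recursion on points; the family of pairs is produced by a different (maximal-family) argument that makes no local smallness assumption. Since your actual argument merely cites \cite{To90}, this slip does not affect the validity of your proof, but your mental picture of the cited step should be adjusted.
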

\begin{proof}
	\cite[Lemma 1]{To90} states that in every compact space $K$ there is
a family $\{(F_t, G_t)\colon t\in T\}$ such that

\begin{enumerate}[(a)]
\item $F_t\sub G_t\sub K$, where $F_t$ is closed and $G_t$ is open for every $t$,
\item the interiors of $F_t$ are nonempty and  form a $\pi$-base of $K$,
\item for every $T_0\sub T$ the subfamily $\{(F_t, G_t)\colon t\in T_0\}$  is free if and only
if $\bigcap_{t\in T_0} F_t\neq\emptyset$. \label{cefree}
\end{enumerate}

It is enough to note that in our setting $T$ must be countable. Indeed, otherwise take a measure $\mu$ which is strictly positive on $K$. Then $\mu(F_t)>0$ so by Theorem \ref{cifr} and (\ref{cefree}) we get an uncountable free family, which implies that the tightness of K is uncountable.
\end{proof}

\section{Measures on spaces of small $\pi$-character.} \label{pi-character}

In this section we give a relatively consistent negative answer to Problem \ref{problem2}:
there may be  no Suslinean space of countable $\pi$-character supporting a measure.

By ($\Re$) we  denote the following statement: \[ \mathfrak{c}=\omega_2 \ \& \ \mathrm{non}(\mathcal{N})=\omega_1 \ \& \ \mathrm{cov}(\mathcal{N}_{\omega_2}) = \mathfrak{c}.\]  ($\Re$) holds e.g. in the model obtained by forcing with
$\mathfrak{A}_{\omega_2}$. Since $\mathrm{cov}(\mathcal{N}_{\omega_1}) \ge \mathrm{cov}(\mathcal{N}_{\omega_2})$, the
axiom ($\Re$) is stronger that the assumption $\mathrm{cov}(\mathcal{N}_{\omega_1})>\omega_1$
used in the previous section.

The axiom ($\Re$) has an interesting impact on several properties of measures on topological spaces.
Let us mention the following two results proved in \cite{Pl97} and \cite{Pl00}, respectively.

\begin{thm}

Assume($\Re$).

\begin{enumerate}[(a)]
\item A compact space $K$ carries a measure of Maharam type $\con$ if and only if
there is a continuous surjection from $K$ onto $[0,1]^\con$.
\item Every Radon measure on a first-countable compactum is strongly countably determined.
\end{enumerate}
\end{thm}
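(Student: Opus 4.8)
\emph{Part (a), the easy implication.} If $f\colon K\to[0,1]^\con$ is a continuous surjection then $f^{*}\colon C([0,1]^\con)\to C(K)$ is an isometric unital embedding, so the state $h\mapsto\int h\,d\lambda_\con$ extends, by Hahn--Banach and the Riesz representation theorem, to a Radon measure $\mu$ on $K$ with $f_{*}\mu=\lambda_\con$. Then $[A]\mapsto[f^{-1}A]$ embeds $\mathfrak A_\con$ into $\mathfrak A(\mu)$, so the Maharam type of $\mu$ is at least $\con$; restricting $\mu$ to a homogeneous component of Maharam type exactly $\con$ (such a component exists by the Maharam structure theorem, since a finite measure has only countably many homogeneous components and $\operatorname{cf}(\con)>\omega$) gives the required measure. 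This half uses no extra axioms.

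\emph{Part (a), the hard implication.} Let $\mu$ on $K$ have Maharam type $\con$. By Maharam's theorem some Borel set of positive measure carries a homogeneous restriction of type $\con$; shrinking it to a compact set $L$ by inner regularity, replacing $K$ by $\supp(\mu\restriction L)$, and — once a surjection onto $[0,1]^\con$ is found on the smaller space — extending it coordinatewise over $K$ by Tietze, we may assume $K=\supp(\mu)$ and $\mathfrak A(\mu)\cong\mathfrak A_\con$. The plan is to build a continuous surjection onto $2^{\con}$ (then compose with the canonical map $2^{\con}\to[0,1]^\con$), i.e. to produce $\con$ continuous functions $g_\xi\colon K\to[0,1]$ whose diagonal $g=(g_\xi)_{\xi<\con}\colon K\to[0,1]^\con$ satisfies $g_{*}\mu=\lambda_\con$ --- for then $g[K]$ is closed and dense, hence equal to $[0,1]^\con$. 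I would construct the $g_\xi$ by recursion on $\xi<\con=\ot$, keeping $(g_\eta)_{\eta\le\xi}$ stochastically independent with uniform laws. At stage $\xi$ the $\sigma$-subalgebra $\mathcal C_\xi$ generated modulo null sets by $(g_\eta)_{\eta<\xi}$ has Maharam type $\le|\xi|\cdot\omega\le\oo<\con$, so in $\mathfrak A(\mu)\cong\mathfrak A_\con$ there is abundant room for a new $\tfrac12$-element independent of $\mathcal C_\xi$; the real task is to realise the new coordinate by a \emph{continuous} function. One does this by taking $g_\xi$ to be a uniform limit of finite dyadic step functions subordinate to closed sets that are only \emph{approximately} independent of $\mathcal C_\xi$, and it is exactly here that $\cov(\NN_{\ot})=\con$ --- effectively a Martin-type axiom for the measure algebra $\mathfrak A_{\ot}$ --- enters: it lets one meet, at stage $\xi$, the fewer-than-$\con$ approximation requirements that $\mathcal C_\xi$ imposes all at once, while $\mathrm{non}(\NN)=\oo$ and $\con=\ot$ keep the recursion of length $\ot$ going.

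\emph{The main obstacle} is precisely this single recursion step. A $\mu$-equivalence class need not contain any closed --- let alone clopen --- representative, so independence among continuous functions is attainable only approximately, and the errors must be controlled uniformly enough over \emph{all} finite subsets of the $\con$ coordinates that the limiting diagonal map is still onto $[0,1]^\con$. Guaranteeing such a ``uniformly generic'' family is what forces one through $\cov(\NN_{\ot})=\con$; in the Boolean (zero-dimensional) case the $g_\xi$ can be taken to be indicators of clopen sets and the argument is considerably cleaner, which is why the analogous statement for Boolean spaces is classically easier.

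\emph{Part (b).} Given a Radon measure $\mu$ on a first-countable compactum $K$, pass to $\supp(\mu)$ --- still compact and first-countable --- so that we may assume $\mu$ strictly positive; it suffices to find a countable family of closed $G_\delta$ subsets of $\supp(\mu)$ determining $\mu$, as such sets stay closed $G_\delta$ in $K$. First I would pin down the Maharam type: were it $\ge\con$, part (a) would yield a continuous surjection $K\to[0,1]^\con$, hence onto $[0,1]^{\oo}$, impossible for a first-countable space; running the same argument one level lower --- using the consequence $\cov(\NN_{\oo})>\oo$ of $(\Re)$ --- excludes type $\ge\oo$, so $\mu$ has countable Maharam type. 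Next, using $(\Re)$ again (it is incompatible with a first-countable Suslinean space supporting a measure), one shows $K$ is separable; since by Arhangel'skii's theorem $|K|\le\con$ as well, one can then take a countable dense set together with countable local bases to manufacture countably many bump functions whose level sets $\{f=1\}$ are closed $G_\delta$, and check by inner regularity that every open set is approximated in measure from inside by finite unions of these. The delicate point here is the uniformity --- one countable family must serve every open $U$ at once --- which is where first-countability (countable bases at \emph{all} points, not merely at a dense set) is used in full strength; $(\Re)$ itself enters only through part (a) and through forcing $K$ to be separable.

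Summing up, the entire weight of the theorem rests on the hard half of (a): converting the \emph{abstract} isomorphism $\mathfrak A(\mu)\cong\mathfrak A_\con$ into a \emph{topological} surjection onto $[0,1]^\con$, which succeeds exactly because $(\Re)$ furnishes a Martin-type axiom, $\cov(\NN_{\ot})=\con$, for the measure algebra in question; part (b) is then a comparatively soft consequence.
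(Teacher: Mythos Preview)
First, note that the paper does not actually prove this theorem: it is quoted as background, with the two parts attributed to \cite{Pl97} and \cite{Pl00} respectively. So there is no in-paper argument to compare against; what follows assesses your sketch on its own terms.

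Your outline of the hard direction in (a) is in the right spirit --- one does build an independent family of closed sets by a transfinite recursion of length $\con=\omega_2$, and $\cov(\NN_{\omega_2})=\con$ is what carries the successor step --- but you under-specify the role of $\mathrm{non}(\NN)=\omega_1$. In the actual construction it is this hypothesis that bounds, at each stage $\xi<\omega_2$, the amount of data one must remain independent of by $\omega_1$; that is precisely why $\cov(\NN_{\omega_2})>\omega_1$ then suffices to push through. Your sentence ``$\mathrm{non}(\NN)=\omega_1$ and $\con=\omega_2$ keep the recursion of length $\omega_2$ going'' gestures at this but does not explain it, and this matters for what comes next.

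The genuine gap is in (b). Your step ``running the same argument one level lower --- using $\cov(\NN_{\omega_1})>\omega_1$ --- excludes type $\ge\omega_1$'' does not go through. The recursion you sketched for (a) rests on the pair $\mathrm{non}(\NN)=\omega_1$ and $\cov(\NN_{\omega_2})=\omega_2$; descending a level would ask for either $\mathrm{non}(\NN)=\omega$ (impossible) or $\cov(\NN_{\omega_1})=\omega_1$ (the negation of what you invoke), so the argument simply does not scale down. Establishing that under $(\Re)$ every Radon measure on a first-countable compactum has countable Maharam type requires a different mechanism, and this is in fact where most of the work in \cite{Pl00} lies. Your final step is also not the routine verification you suggest: inner regularity approximates each individual open set from inside by \emph{some} closed sets, but producing a \emph{single} countable family of closed $G_\delta$ sets that works uniformly for every open $U$ is exactly the content of ``strongly countably determined'', and a countable dense set together with countable local bases does not obviously deliver this without further argument.
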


We shall need a theorem due to Fremlin explaining  that $\mathrm{non}$ and $\mathrm{cov}$ are rather cardinal coefficients of measure algebras than of concrete measure spaces.

\begin{thm}(\cite[Theorem 6.13(c)(d)]{Fremlin-mBA}\label{fremlin})
Suppose $\nu_1$, $\nu_2$ are Radon measures. If $\nu_1$ and $\nu_2$ have isomorphic measure algebras, then $\gamma(\nu_1)=\gamma(\nu_2)$, where $\gamma\in \{\mathrm{non},\mathrm{cov}\}$.
\end{thm}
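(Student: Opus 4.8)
The plan is to reduce the statement to the fact that $\mathrm{non}$ and $\mathrm{cov}$ are transported along \emph{inverse-measure-preserving} maps, and then to produce such maps in both directions between the underlying spaces $K_1,K_2$ of $\nu_1,\nu_2$. Two preliminary observations set this up. First, rescaling a measure by a positive constant changes neither its null ideal nor the cardinals $\mathrm{non}$ and $\mathrm{cov}$, so we may assume $\nu_1,\nu_2$ are probabilities and that the given isomorphism $\pi\colon \mathfrak{A}(\nu_2)\to\mathfrak{A}(\nu_1)$ is measure-preserving. Second, $\pi$ carries atoms to atoms, so $\mathfrak{A}(\nu_1)$ is atomless iff $\mathfrak{A}(\nu_2)$ is; and if both algebras have an atom, then on each side $\mathrm{non}=1$ (a positive-measure singleton; for a Radon measure an atom is a point mass), while on each side $K_i$ is not a union of null sets, so $\mathrm{cov}$ takes its conventional degenerate value — in either case the two sides agree, and we may assume from now on that both algebras are atomless.

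For the construction, recall that a Radon measure is inner regular with respect to the class of compact sets, hence is a \emph{compact} (in particular perfect) measure, and that every $\nu$-null set is contained in a Borel $\nu$-null set. Since $\nu_1$ is a totally finite compact measure, Fremlin's representation theorem for measure-preserving homomorphisms (see \cite{FremlinMT}) produces an inverse-measure-preserving map $\varphi\colon K_1\to K_2$, defined off a $\nu_1$-null set, with $\nu_1(\varphi^{-1}[B])=\nu_2(B)$ for every Borel $B\subseteq K_2$; applying the same theorem to $\pi^{-1}$ and the compact measure $\nu_2$ yields an inverse-measure-preserving $\psi\colon K_2\to K_1$. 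We shall use only that $\varphi$ and $\psi$ exist, not that they represent $\pi$ and $\pi^{-1}$; discarding the relevant null sets, we may take $\varphi,\psi$ everywhere defined, the negligible exceptional sets being absorbed in the estimates below.

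Now the transport. If $X\subseteq K_1$ is not $\nu_1$-null, then $\varphi[X]$ is not $\nu_2$-null: otherwise $\varphi[X]$ would lie in a Borel $\nu_2$-null set $N$, whence $X\subseteq\varphi^{-1}[N]$ would be $\nu_1$-null. Since $|\varphi[X]|\le|X|$, this gives $\mathrm{non}(\nu_2)\le\mathrm{non}(\nu_1)$, and symmetrically (via $\psi$) equality. Next, given a family $\mathcal{A}$ of $\nu_2$-null sets with $\bigcup\mathcal{A}=K_2$, enlarge each member to a Borel $\nu_2$-null set without changing the union; then $\{\varphi^{-1}[A]\colon A\in\mathcal{A}\}$ covers $K_1$ by $\nu_1$-null sets and has cardinality $\le|\mathcal{A}|$, so $\mathrm{cov}(\nu_1)\le\mathrm{cov}(\nu_2)$, and again $\psi$ gives the reverse inequality. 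Hence $\mathrm{non}(\nu_1)=\mathrm{non}(\nu_2)$ and $\mathrm{cov}(\nu_1)=\mathrm{cov}(\nu_2)$.

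The delicate point is the representation step: one must invoke the correct form of the theorem asserting that a measure-preserving Boolean homomorphism into the measure algebra of a compact measure is induced by a point map, and check that Radon measures meet its hypotheses (they do, being totally finite and compact), all while keeping track of the exceptional null set on which such a map is only defined and of the degenerate atomic case. An alternative route, which runs into essentially the same difficulty, is to use the Maharam structure theorem to decompose each $\nu_i$ into its atomic part and countably many Maharam-homogeneous components, observe that $\mathrm{non}$ of such a direct sum is the least $\mathrm{non}$ among the components and $\mathrm{cov}$ the supremum of their $\mathrm{cov}$'s (both read off the algebra), and compare a homogeneous component of type $\kappa$ with $(2^\kappa,\lambda_\kappa)$ — which once more calls for inverse-measure-preserving maps in both directions.
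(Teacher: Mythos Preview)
The paper does not prove this theorem at all: it is quoted verbatim from Fremlin \cite[Theorem 6.13(c)(d)]{Fremlin-mBA} and used as a black box, so there is no ``paper's own proof'' to compare against. Your argument is a correct reconstruction of the standard proof: realise the measure-algebra isomorphism (in each direction) by an inverse-measure-preserving point map using the lifting/representation theorem for compact (hence perfect) measures, and then transport $\mathrm{non}$ and $\mathrm{cov}$ along those maps. The handling of the exceptional null domains and of the atomic case is adequate; the only place to be a little more careful is the appeal to the representation theorem---you should state precisely which hypothesis (perfectness of the \emph{domain} measure of the point map, completeness and strict localizability of the \emph{codomain}) is used where, since getting the direction wrong is the usual pitfall. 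Your closing remark about the alternative Maharam-decomposition route is also accurate and is in fact closer in spirit to how Fremlin organises the material.
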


This observation allows us to prove the following.

\begin{lem}\label{noncov}
	Let $\mu$ be a measure on a compact space $K$. Assume that $K$ cannot be mapped continuously onto $[0,1]^{\mathfrak{c}^+}$. If $\mathrm{non}(\mathcal{N})=\omega_1$, then $\mathrm{non}(\mu)=\omega_1$ and if $\mathrm{cov}(\mathcal{N}_{\omega_2}) = \omega_2$, then $\mathrm{cov}(\mu)=\omega_2$.
\end{lem}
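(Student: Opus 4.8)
The plan is to reduce the computation of $\mathrm{non}(\mu)$ and $\mathrm{cov}(\mu)$ to the Maharam structure of the measure algebra $\mathfrak A(\mu)$ and then to apply Theorem~\ref{fremlin}. We may assume that $\mu$ is atomless. The one substantial point is that the hypothesis ``$K$ does not map continuously onto $[0,1]^{\mathfrak c^+}$'' forces $\mu$ to have Maharam type at most $\mathfrak c$: it is well known (cf.\ \cite{Pl97}) that a compact space carrying a Radon measure of Maharam type $\geq\mathfrak c^+$ admits a continuous surjection onto $[0,1]^{\mathfrak c^+}$ --- inside a homogeneous component of type $\geq\mathfrak c^+$ one has an independent family $\{A_\xi:\xi<\mathfrak c^+\}$ of sets of measure $\tfrac12$, approximates the $A_\xi$ and their complements from inside by compact sets, and reads off the desired surjection. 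Hence by the Maharam structure theorem every homogeneous component of $\mu$ has type at most $\mathfrak c$.

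Next I would run the Maharam decomposition. As $\mu$ is totally finite and atomless it has only countably many homogeneous components, so one may write $K=\bigcup_n K_n$ up to a $\mu$-null set, with the $K_n$ pairwise disjoint Borel sets and $\mu_n$ (the restriction of $\mu$ to $K_n$) homogeneous of some type $\kappa_n$ with $\omega\leq\kappa_n\leq\mathfrak c$; thus $\mathfrak A(\mu_n)\cong\mathfrak A_{\kappa_n}$. Two elementary observations give $\mathrm{non}(\mu)=\min_n\mathrm{non}(\mu_n)$ and $\mathrm{cov}(\mu)=\sup_n\mathrm{cov}(\mu_n)$: a set non-null for some $\mu_n$ is non-null for $\mu$, while conversely (by countable subadditivity of outer measure) every $\mu$-non-null set meets some $K_n$ in a $\mu_n$-non-null set; and any covering of $K$ by $\mu$-null sets restricts to a covering of each $K_n$, whereas a union of coverings of the $K_n$ by $\mu_n$-null sets covers $K$ (modulo one further null set). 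By Theorem~\ref{fremlin}, applied to the Radon measures $\mu_n$ and $\lambda_{\kappa_n}$ (which have isomorphic measure algebras), $\mathrm{non}(\mu_n)=\mathrm{non}(\mathcal N_{\kappa_n})$ and $\mathrm{cov}(\mu_n)=\mathrm{cov}(\mathcal N_{\kappa_n})$.

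Finally I would invoke the classical computation of Fremlin that $\mathrm{non}(\mathcal N_\kappa)=\mathrm{non}(\mathcal N)$ and $\mathrm{cov}(\mathcal N_\kappa)=\mathrm{cov}(\mathcal N)$ whenever $\omega\leq\kappa\leq\mathfrak c$; it is precisely in order to stay within this range of $\kappa$ that the Maharam-type bound of the first paragraph was needed. Combining this with the previous paragraph yields $\mathrm{non}(\mu)=\mathrm{non}(\mathcal N)$ and $\mathrm{cov}(\mu)=\mathrm{cov}(\mathcal N)$. In particular, $\mathrm{non}(\mathcal N)=\omega_1$ gives $\mathrm{non}(\mu)=\omega_1$. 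For the covering clause, observe that $\mathrm{cov}(\mathcal N_{\omega_2})\leq\mathrm{cov}(\mathcal N)\leq\mathfrak c$, so the assumption $\mathrm{cov}(\mathcal N_{\omega_2})=\omega_2$ is empty unless $\omega_2\leq\mathfrak c$; granting $\omega_2\leq\mathfrak c$, the cited identity with $\kappa=\omega_2$ gives $\mathrm{cov}(\mathcal N)=\mathrm{cov}(\mathcal N_{\omega_2})=\omega_2$, whence $\mathrm{cov}(\mu)=\omega_2$.

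The step I expect to be the main obstacle is the first one, the implication ``$K$ does not map onto $[0,1]^{\mathfrak c^+}$'' $\Rightarrow$ ``$\mu$ has Maharam type $\leq\mathfrak c$''; everything after it is the structure theory of measure algebras together with Theorem~\ref{fremlin} --- which is exactly the tool the lemma is built to exploit --- plus the standard values of $\mathrm{non}$ and $\mathrm{cov}$ for the homogeneous measure algebras $\mathfrak A_\kappa$ with $\kappa\leq\mathfrak c$.
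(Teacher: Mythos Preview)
Your argument is correct and follows essentially the same route as the paper's proof: bound the Maharam type of $\mu$ by $\mathfrak c$ via Haydon's theorem (the paper cites \cite{Haydon} and \cite{FremlinMT} for this step), then combine the Maharam structure theorem with Theorem~\ref{fremlin} and the Kraszewski--Fremlin identities $\mathrm{non}(\mathcal N_\kappa)=\mathrm{non}(\mathcal N)$ and $\mathrm{cov}(\mathcal N_\kappa)=\mathrm{cov}(\mathcal N)$ for $\omega\le\kappa\le\mathfrak c$. You spell out the Maharam decomposition and the computation of $\mathrm{non}$ and $\mathrm{cov}$ on a countable disjoint sum more explicitly than the paper, which compresses all of this into the single sentence ``By the Maharam theorem and Theorem~\ref{fremlin}, we are done,'' but the ingredients and the logic are identical.
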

\begin{proof}
	By Kraszewski's theorem (see \cite[Corollary 3.11]{kraszewski})
	\[ \mathrm{non}(\mathcal{N}) =\mathrm{non}(\mathcal{N}_{\omega_1}) = \mathrm{non}(\mathcal{N}_{\omega_2}). \]
It is not difficult to see that $\mathrm{cov}(\mathcal{N}_\kappa)\geq \mathrm{cov}(\mathcal{N}_\lambda)$ if $\kappa\leq \lambda$ (see \cite[Fact 4.1]{kraszewski}). Hence  \[ \mathrm{cov}(\mathcal{N}) = \mathrm{cov}(\mathcal{N}_{\omega_1}) =
\mathrm{cov}(\mathcal{N}_{\omega_2}) = \omega_2.\]
Now it is enough to  notice that the Maharam type of $\mu$ is  at most $\mathfrak{c}$. Indeed, by a theorem due to Haydon, if $K$ carries a Radon measure of Maharam type $\mathfrak{c}^+$, then $K$ can be mapped continuously
onto $[0,1]^{\mathfrak{c}^+}$ (see \cite[Theorem 2.4]{Haydon}). But, if $K$ carries a measure of type $\lambda>\mathfrak{c}$, then it carries a measure of type $\mathfrak{c}^+$, see
Fremlin \cite[Section 531]{FremlinMT}.

By the Maharam theorem and Theorem \ref{fremlin}, we are done.
\end{proof}

\begin{prop} \label{non}
	Assume $(\Re)$. If $K$ is a compact space supporting a measure and $K$ cannot be mapped continuously onto $[0,1]^{\mathfrak{c}^+}$, then $\mathrm{d}(K)\leq \omega_1$.
\end{prop}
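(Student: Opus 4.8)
The plan is to deduce the result from Lemma \ref{noncov} by a transfinite recursion of length $\omega_1$ that produces a dense subset of $K$ of cardinality at most $\omega_1$. Since $K$ does not map continuously onto $[0,1]^{\mathfrak{c}^+}$, Lemma \ref{noncov} gives $\mathrm{non}(\mu)=\omega_1$; what I will actually use is the following localized version. For an open $V\subseteq K$ with $\mu(V)>0$, write $\mu_V$ for the Radon measure $A\mapsto\mu(A\cap V)$. Then the Maharam type of $\mu_V$ is at most that of $\mu$, hence at most $\mathfrak{c}$, so --- by exactly the argument used for Lemma \ref{noncov} (Maharam's theorem, Theorem \ref{fremlin}, and Kraszewski's equality $\mathrm{non}(\mathcal{N}_\kappa)=\mathrm{non}(\mathcal{N})$, valid for every infinite $\kappa$) --- we have $\mathrm{non}(\mu_V)=\mathrm{non}(\mathcal{N})=\omega_1$. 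Consequently there is a set $Z\subseteq V$ with $|Z|\le\omega_1$ and $\mu^*(Z)>0$. (Note that $\mu$ is automatically atomless here, since $\mathrm{non}(\mu)=\omega_1>1$.)

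With this in hand, build an increasing chain $(Y_\alpha)_{\alpha\le\omega_1}$ of subsets of $K$, each of cardinality at most $\omega_1$: set $Y_0=\emptyset$, take unions at limit stages, and given $Y_\alpha$ put $V_\alpha=K\setminus\overline{Y_\alpha}$. If $V_\alpha=\emptyset$ the construction stops; otherwise $\mu(V_\alpha)>0$ because $\mu$ is strictly positive, so the localized statement gives $Z_\alpha\subseteq V_\alpha$ with $|Z_\alpha|\le\omega_1$ and $\mu^*(Z_\alpha)>0$, and we put $Y_{\alpha+1}=Y_\alpha\cup Z_\alpha$. Since the chain is increasing, $\overline{Y_\alpha}$ increases and $V_\alpha$ decreases, so $\alpha\mapsto\mu(V_\alpha)$ is non-increasing; moreover $V_{\alpha+1}=V_\alpha\setminus\overline{Z_\alpha}$ and $Z_\alpha\subseteq V_\alpha\cap\overline{Z_\alpha}$, so $\mu(V_{\alpha+1})\le\mu(V_\alpha)-\mu^*(Z_\alpha)<\mu(V_\alpha)$ at every successor stage at which the construction is still running. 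Hence the construction must stop at some $\alpha_0<\omega_1$: if it ran through all $\alpha<\omega_1$, the intervals $\bigl(\mu(V_{\alpha+1}),\mu(V_\alpha)\bigr)$ for $\alpha<\omega_1$ would form an uncountable family of pairwise disjoint nonempty open subintervals of $[0,1]$, which is impossible. For this $\alpha_0$ we have $\overline{Y_{\alpha_0}}=K$ and $|Y_{\alpha_0}|\le|\alpha_0|\cdot\omega_1=\omega_1$, so $\mathrm{d}(K)\le\omega_1$.

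The recursion is essentially bookkeeping, and it uses strict positivity only to pass from ``$V_\alpha\neq\emptyset$'' to ``$\mu(V_\alpha)>0$''. The substantive point is the localized statement, and the step I expect to require the most care is the transfer of $\mathrm{non}$ from $\mathcal{N}$ to an arbitrary restriction $\mu_V$ --- this is where the bound $\mathfrak{c}$ on the Maharam type (and hence the hypothesis that $K$ does not surject onto $[0,1]^{\mathfrak{c}^+}$) together with Kraszewski's theorem are needed, exactly as in Lemma \ref{noncov}. Incidentally, this argument uses only $\mathrm{non}(\mu)=\omega_1$ and not the consequence $\mathrm{cov}(\mu)=\omega_2$ of $(\Re)$.
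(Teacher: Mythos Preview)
Your proof is correct and follows essentially the same approach as the paper's: iterate the localized $\mathrm{non}(\mu)=\omega_1$ fact to accumulate a set of size $\le\omega_1$, observe the process terminates after countably many steps because the relevant measures strictly decrease, and conclude density. The only cosmetic difference is that the paper peels off measurable hulls (aiming first for a set of outer measure $1$ and invoking strict positivity once at the end), whereas you peel off closures (invoking strict positivity at each step to keep the recursion going); neither variant requires anything beyond Lemma~\ref{noncov} and the obvious bound on the Maharam type of a restriction.
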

\begin{proof}
	We will show that there is $X\subseteq K$ such that $|X|\leq \omega_1$ and $\mu^*(X)=1$. As $\mu$ is strictly positive, we will be done. Lemma \ref{noncov} and Theorem \ref{fremlin} imply a priori only that there is $X'\subseteq K$ of size
	$\omega_1$ such that $\mu^*(X')> 0$. But then we can consider the measurable hull of $X'$, repeat the same argument for $\mu_{|K\setminus X'}$ which is still a Radon measure. Proceeding in this manner, at some countable step we obtain a
	(countable) family of sets of size $\omega_1$ whose union has outer measure $1$ in $K$. Define $X$ as this union.
\end{proof}

We shall now prove the main results of the section.

\begin{thm} \label{pi}
Assume $(\Re)$. If $K$ is a compact space supporting a measure and $\pi\chi(K)\leq \omega_1$, then $K$ is separable.
\end{thm}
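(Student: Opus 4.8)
The strategy is to combine Proposition \ref{non} with the small-$\pi$-character hypothesis, imitating the structure of the proof of Theorem \ref{kamburelis} but now inside a set of small density rather than the whole space. First I would invoke Proposition \ref{non}: since $K$ supports a measure $\mu$ and (under $(\Re)$, a countably-tight-free argument aside) $K$ cannot map onto $[0,1]^{\con^+}$ --- this last point needs a word: a space of $\pi$-character $\le\omega_1$ cannot be mapped onto $[0,1]^{\con^+}$, since a continuous surjection onto a product $[0,1]^{\con^+}$ would pull back the standard $\pi$-base and force $\pi\chi(K)\ge\con^+$, or one argues via cellularity/weight --- Proposition \ref{non} gives a set $X\subseteq K$ with $|X|\le\omega_1$ and $\mu^*(X)=1$. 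So $X$ is $\mu^*$-thick, and it suffices to show $X$ has a countable dense subset, because a countable subset of $X$ dense in $X$ is automatically dense in $\overline X = K$ (as $\mu^*(X)=1$ forces $\overline X=K$: any nonempty open $U$ disjoint from $X$ would have $\mu(U)>0$ by strict positivity, contradicting $\mu^*(X)=1$).

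Next I would fix, for each $x\in K$, a countable $\pi$-base $\mathcal P_x$ at $x$, using $\pi\chi(K)\le\omega_1$ --- wait, $\pi\chi(K)\le\omega_1$ only gives $\pi$-bases at points of size $\le\omega_1$, not countable. So the correct reading is that we want to show separability, and we use $\pi(x,K)\le\omega_1$ for each $x$. Let $\mathcal P = \bigcup_{x\in X}\mathcal P_x$; this has size $\le\omega_1$, and it is a $\pi$-base for the (dense) subspace $X$, hence $\pi(K)\le\omega_1$ as well once we note $X$ is dense. Actually the cleanest route: having reduced to $X$ with $|X|\le\omega_1$ and $\mu^*(X)=1$, observe that $\overline X = K$ so $\pi(K)\le\omega_1$ is not automatic, but we can instead run the $K^\omega$-argument of Theorem \ref{kamburelis} \emph{relative to $X$}. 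Enumerate a $\pi$-base $\mathcal P=\{P_\alpha:\alpha<\omega_1\}$ of $K$ extracted from the point-$\pi$-bases at points of $X$ (each $P_\alpha$ open, and for every open $U$ meeting $X$ some $P_\alpha\subseteq U$). Work in $K^\omega$ with $\nu=\prod_n\mu$; for each $\alpha$ put $D_{P_\alpha}=\bigcap_n\pi_n^{-1}[K\setminus P_\alpha]$, a $\nu$-null set since $\mu(P_\alpha)>0$. By Lemma \ref{noncov} (the $\cov$ half, using $\cov(\mathcal N_{\omega_2})=\con$) we get $\cov(\nu)=\con>\omega_1$ --- note $\nu$ is Radon on the compact space $K^\omega$ and its Maharam type is still $\le\con$, so Lemma \ref{noncov} applies --- hence $\{D_{P_\alpha}:\alpha<\omega_1\}$ does not cover $K^\omega$. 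Pick $x\notin\bigcup_\alpha D_{P_\alpha}$; then $S=\{\pi_n(x):n\in\omega\}$ meets every $P_\alpha$, so $S$ is dense in $K$, and $K$ is separable.

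\textbf{Main obstacle.} The delicate point is the first reduction: checking that a compact space with $\pi\chi(K)\le\omega_1$ cannot be mapped continuously onto $[0,1]^{\con^+}$, so that Proposition \ref{non} (or directly Lemma \ref{noncov}) is applicable. For this one can argue that $[0,1]^{\con^+}$ has $\pi$-character $\con^+$ at every point, and $\pi$-character cannot increase under continuous surjections of compacta (a standard fact: if $g\colon K\to L$ is a continuous surjection of compacta and $\mathcal P$ is a $\pi$-base at $x\in g^{-1}[y]$, then $\{L\setminus g[K\setminus P]:P\in\mathcal P\}$, after discarding empty members, is a $\pi$-base at $y$), whence $\pi\chi(L)\le\pi\chi(K)$. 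Thus $\pi\chi(K)\le\omega_1<\con^+\le\pi\chi([0,1]^{\con^+})$ rules out such a surjection, and in fact rules out a surjection onto $[0,1]^{\omega_1}$ as well whenever $\con=\omega_2$. A secondary nuisance is bookkeeping the countable-step recursion already done inside Proposition \ref{non} and making sure the $\pi$-base of $K$ can genuinely be chosen of size $\le\omega_1$ supported on points of the thick set $X$; but since $X$ is dense, any $\pi$-base at points of $X$ that refines into arbitrarily small open sets is automatically a $\pi$-base of $K$, so this is routine.
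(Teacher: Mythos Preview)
Your overall strategy coincides with the paper's: show $K$ cannot map onto $[0,1]^{\con^+}$, apply Proposition \ref{non} to get a dense $X\subseteq K$ of size $\le\omega_1$, collect local $\pi$-bases at points of $X$ to obtain $\pi(K)\le\omega_1$, and finish with Theorem \ref{kamburelis}. The paper simply cites Theorem \ref{kamburelis} at the end rather than re-running the $K^\omega$ argument, but that is cosmetic.

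There is, however, a genuine gap in your justification of the first step. The claim that for a continuous surjection $g\colon K\to L$ of compacta and a local $\pi$-base $\mathcal P$ at $x\in g^{-1}(y)$ the family $\{L\setminus g[K\setminus P]:P\in\mathcal P\}$ is a local $\pi$-base at $y$ is false as stated: for $K=[0,1]\times\{0,1\}$, $L=[0,1]$, $g$ the projection and $\mathcal P=\{[0,1/n)\times\{0\}:n\ge 1\}$ at $x=(0,0)$, every $L\setminus g[K\setminus P]$ is empty. The theorem you want ($\pi\chi$ does not increase under continuous surjections of compacta) is indeed due to \v{S}apirovski\u{\i}, but its proof requires first passing to a closed $K_0\subseteq K$ on which $g$ is irreducible; then your construction works, since irreducibility guarantees $g[K_0\setminus P]\neq L$ whenever $P$ is open and nonempty.

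The paper sidesteps this entirely by invoking the \v{S}apirovski\u{\i} inequality $w(K)\le\pi\chi(K)^{c(K)}$: since $K$ supports a measure it is ccc, so $w(K)\le\omega_1^\omega=\con$, and weight cannot increase under continuous surjections of compacta, ruling out a map onto $[0,1]^{\con^+}$. Either route works, but yours needs the irreducibility fix.
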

\begin{proof}
	Notice that if $K$ is a compact space supporting a measure and $\pi\chi(K)\leq\omega_1$, then $K$ cannot be mapped continuously onto $[0,1]^{\mathfrak{c}^+}$. Indeed, it is known that $w(X)\leq \pi\chi(X)^{c(X)}$ for all $T_3$ spaces (see
	\cite{Sapiro}), where $\mathrm{c}(X)$ is the \emph{cellularity} of $X$, i.e. the supremum of sizes of families of pairwise disjoint open subsets of $K$.
Hence $w(K)\leq \omega_1^\omega = \mathfrak{c}$. Moreover, a continuous mapping cannot increase the weight of compact spaces.

 Now it follows from Proposition \ref{non} that $\mathrm{d}(K)\leq \omega_1$.  Fix a dense set $X\subseteq K$ of size $\omega_1$ and then for each $x\in X$ fix a $\pi$-base $\mathcal{F}_x$ at $x$ of size $\omega_1$. Then $\mathcal{P} = \bigcup_{x\in X} \mathcal{F}_x$ is a $\pi$-base of size at most $\omega_1$. Indeed, if $V$ is a nonempty open subset of $K$, then it contains $x\in X$. So $\mathcal{F}_x$ contains a nonempty open $U\subseteq V$.

We conclude that $\pi(K)\le\omega_1$, and
Theorem \ref{kamburelis} gives the separability of $K$.
\end{proof}

\begin{thm} \label{cov}
	Assume $(\Re)$. Let $K$ be a space of countable $\pi$-character, supporting a measure. Then $K$ has a countable $\pi$-base.
\end{thm}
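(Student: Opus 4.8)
The plan is to combine Theorem \ref{pi} with a closer look at the proof of Proposition \ref{non}. By Theorem \ref{pi} (applied with the stronger hypothesis $\pi\chi(K)=\omega$, which certainly implies $\pi\chi(K)\le\omega_1$), the space $K$ is separable, so it has density $\omega$. The point that remains is to upgrade ``separable'' to ``has a countable $\pi$-base'' using the countable $\pi$-character; this last step is purely topological, since a space with a countable dense set $D$ and a countable $\pi$-base at each point admits $\bigcup_{x\in D}\mathcal P_x$ as a countable $\pi$-base (exactly as in the last paragraph of the proof of Theorem \ref{pi}, but now all the $\mathcal P_x$ and $D$ are countable).

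First I would invoke Theorem \ref{pi}: since $\pi\chi(K)\le\omega$, the hypothesis of Theorem \ref{pi} is met, so $K$ is separable; fix a countable dense set $D\subseteq K$. Next, for each $x\in D$ fix a countable $\pi$-base $\mathcal P_x$ at $x$, which exists because $\pi\chi(K)=\omega$. Then I would set $\mathcal P=\bigcup_{x\in D}\mathcal P_x$, a countable family of nonempty open sets, and verify it is a $\pi$-base: given nonempty open $V\subseteq K$, density of $D$ gives some $x\in D\cap V$, and then $\mathcal P_x$ being a $\pi$-base at $x$ yields some $U\in\mathcal P_x$ with $U\subseteq V$. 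Hence $\pi(K)\le\omega$, i.e. $K$ has a countable $\pi$-base.

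The only genuine content is Theorem \ref{pi}, which is already established; the rest is the standard observation that separability plus countable $\pi$-character gives a countable $\pi$-base, so there is essentially no obstacle here — this theorem is really just the $\pi\chi(K)=\omega$ specialization of Theorem \ref{pi} together with this routine topological remark. If one wished to avoid quoting Theorem \ref{pi} as a black box, the main work would instead be re-deriving, under $(\Re)$, that a measure-supporting space of countable $\pi$-character is separable, i.e. redoing the chain Lemma \ref{noncov} $\Rightarrow$ Proposition \ref{non} $\Rightarrow$ (with $w(K)\le\pi\chi(K)^{c(K)}=\mathfrak c$) Theorem \ref{kamburelis}; but since all of that is available, the proof is short.
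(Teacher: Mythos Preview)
Your proof is correct, but it takes a genuinely different route from the paper's. The paper does not use the conclusion of Theorem \ref{pi} (separability); instead it goes back to the intermediate step of that proof, namely that $K$ has a $\pi$-base $\mathcal P=\{P_\xi:\xi<\omega_1\}$ of size at most $\omega_1$, and then defines the closed sets
\[
Z_\alpha=\{x\in K:\{P_\xi:\xi<\alpha\}\text{ is a $\pi$-base at }x\}.
\]
Since $K=\bigcup_{\alpha<\omega_1}Z_\alpha$ and $\mathrm{cov}(\mu)>\omega_1$ (from $(\Re)$ via Lemma \ref{noncov}), some $Z_\alpha$ has full measure, hence equals $K$, and $\{P_\xi:\xi<\alpha\}$ is the desired countable $\pi$-base. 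Your argument is more economical: you use Theorem \ref{pi} as a black box to get a countable dense set $D$, and then the purely topological fact that $\bigcup_{x\in D}\mathcal P_x$ is a countable $\pi$-base; no second appeal to $(\Re)$ is needed. The paper's argument, by contrast, makes the role of $\mathrm{cov}(\mu)>\omega_1$ explicit at this stage and does not rely on Theorem \ref{kamburelis} a second time through the separability conclusion, but for the statement as written your route is shorter.
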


\begin{proof}
	We already know (see the proof of Theorem \ref{pi}) that there is a $\pi$-base $\mathcal{P}$ of $K$ of size at most $\omega_1$.
Enumerate $\mathcal{P} = \{P_\xi\colon \xi<\omega_1\}$. Denote $\mathcal{P}_\alpha = \{P_\xi\colon \xi<\alpha\}$ and
\[Z_\alpha = \{x\in X\colon \mathcal{P}_\alpha \mbox{ contains a }\pi\mbox{-base of }x\}.\]
Notice that $K = \bigcup_\alpha Z_\alpha$ and that $Z_\alpha$ is closed for every $\alpha$.

As $\mathrm{cov}(\mu)>\omega_1$, there is $\alpha<\omega_1$ such that $\mu(Z_\alpha)=1$. So, $Z_\alpha=K$. One can easily check, as above, that $\mathcal{P}_\alpha$ is a (countable) $\pi$-base of $K$.
\end{proof}

Let us recall that by Theorem \ref{kunen}
 if $\mathrm{cov}(\mathcal{N}_{\omega_1})=\omega_1$, then there is a first-countable non-separable compact space supporting a measure. Of course, first-countable spaces have countable $\pi$-character, so we
 need to assume at least $\mathrm{cov}(\mathcal{N}_{\omega_1})>\omega_1$ to prove Theorem \ref{cov}.

We conclude by some remarks on the so called normal measures.
Recall that a Radon measure on a topological space  $\mu$ is \emph{normal} if each nowhere dense set is null with respect to $\mu$. There are several results concerning the existence of normal measures on certain kind of spaces, e.g.\ Fishel and Papert proved
that there is no normal measure on locally connected spaces (see \cite{Papert}). On the other hand, Plebanek
\cite{Pl14}
constructed  an example of a normal measure on a connected space
(which can be made first-countable under CH). In \cite{Zindulka} Zindulka proved that, consistently there is no normal measure on a first-countable locally compact space. Since the support of a normal measure
cannot be separable, Theorem \ref{cov} implies the following.

\begin{cor} \label{zindulka}
Assume $(\Re)$. There is no normal strictly positive measure on a compact space of a countable $\pi$-character.
\end{cor}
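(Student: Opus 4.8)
\emph{The plan} is to obtain Corollary~\ref{zindulka} as an immediate consequence of Theorem~\ref{cov} together with the fact, recalled just above the corollary, that a normal measure can never have a separable support. Suppose, towards a contradiction, that $\mu$ is a normal strictly positive measure on a compact space $K$ with $\pi\chi(K)\le\om$. Since $\mu$ is strictly positive, $K=\supp(\mu)$; in particular $K$ is a compact space of countable $\pi$-character which supports a measure, so Theorem~\ref{cov} applies and yields a countable $\pi$-base $\{U_n\colon n\in\om\}$ of $K$. (Alternatively one could invoke Theorem~\ref{pi}, which already gives separability of $K$ under the same hypotheses.)

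The remaining step is routine: a space with a countable $\pi$-base is separable, because choosing $x_n\in U_n$ for each $n$ produces a set $\{x_n\colon n\in\om\}$ meeting every nonempty open subset of $K$ (any such subset contains some $U_n$, hence the corresponding $x_n$). Thus $K$ is separable, and as $K=\supp(\mu)$ this contradicts the non-separability of supports of normal measures; this contradiction finishes the proof.

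\emph{The main obstacle.} Once Theorem~\ref{cov} is in place there is essentially nothing left to do, and the only ingredient worth isolating is the non-separability of the support of a normal measure. In the present situation this can in fact be seen directly from the countable $\pi$-base $\{U_n\}$: every non-isolated point of $K$ is a closed nowhere dense singleton, so normality forces $\mu$ to be atomless off its (here necessarily countably many) isolated points, and hence inside every nonempty open set one can find nonempty open sets of arbitrarily small measure. Picking $V_n\subseteq U_n$ nonempty open with $\mu(V_n)<\eps/2^{n+1}$, the set $G=\bigcup_n V_n$ is open and dense while $\mu(G)<\eps$, so $F=K\setminus G$ is closed, nowhere dense and of positive measure, contradicting normality; the scattered clopen part carrying the isolated points is treated separately and is itself separable. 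So the real work has already been carried out in Theorem~\ref{cov}, and the corollary is only a matter of assembling it with this standard observation.
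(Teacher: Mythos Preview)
Your main argument (the first two paragraphs) is correct and is exactly the paper's approach: apply Theorem~\ref{cov} to get a countable $\pi$-base, deduce separability, and contradict the observation (stated just before the corollary) that the support of a normal measure cannot be separable.

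The ``main obstacle'' paragraph is superfluous and a bit loose. Once you have separability the contradiction is immediate, so there is no need for a second, direct argument. If you do want to keep it, note that the step ``inside every nonempty open set one can find nonempty open sets of arbitrarily small measure'' tacitly assumes the open set in question avoids the isolated atoms, and the phrase ``the scattered clopen part carrying the isolated points is treated separately'' is vague: the set of isolated points is open but need not be clopen, so one would still have to argue carefully about its closure. None of this is needed for the corollary, so I would simply drop that paragraph.
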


In fact, using \cite[Lemma 3.2, Theorem 3.3 and Lemma 3.4]{Zindulka} and Theorem \ref{pi} one can show that consistently there is no locally compact space with a countable $\pi$-character carrying a non-trivial residual measure.

\section{Open problems}

Problem \ref{problem} remains unsolved in $\mathsf{ZFC}$. In particular, we still do not know the answer to the following question.

\begin{prob} \label{problem3}
	Does $(\Re)$ implies that there is no compact non-separable space $K$ supporting a measure and such that $K$ cannot be mapped continuously onto $[0,1]^{\omega_1}$?
\end{prob}

Perhaps the construction from Section \ref{example-ma} can be improved in such a way that it works in $\mathsf{ZFC}$.

\begin{prob}\label{problem4}
	Is there a compact non-separable space supporting a measure and a continuous mapping $f\colon K \to 2^\omega$ such that $f^{-1}[t]$ is scattered for each $t\in 2^\omega$?
\end{prob}

If the answer to the above is ``yes'', then we would have to use different methods than those from Section \ref{example-ma}. In the proof of Theorem \ref{main} we added a condition (\ref{pib}) to ensure that the constructed space has countable $\pi$-character. Even
without adding condition (\ref{pib}) the space in question would have the $\pi$-character at most $\omega_1$ (if $\mathfrak{c}=\omega_2$). So, by Theorem \ref{pi}, it would have to be separable under axiom $(\Re)$. There is a slightly more general reason
why under axiom $(\Re)$ we would have to invent more subtle methods to construct a similar example.

\begin{prop}
	Assume $(\Re)$. Suppose that $K$ is a compact space supporting a measure $\mu$, $M$ is a compact metric space and $f\colon K \to M$ is a continuous mapping such that for each $t\in M$ the set $f^{-1}(t)$ contains a dense set of isolated points
	of size at most $\omega_1$. Then $K$ is separable.
\end{prop}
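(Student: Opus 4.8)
The plan is to reduce the statement to Theorem \ref{pi}: I will show that $\pi\chi(K)\le\omega_1$, and then, since $K$ is compact, supports a measure, and $(\Re)$ is assumed, Theorem \ref{pi} gives that $K$ is separable at once.

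To bound the $\pi$-character, fix $x\in K$ and put $t=f(x)$. Since $M$ is metrizable, fix a countable base $(V_n)_{n\in\omega}$ of open neighbourhoods of $t$ in $M$, and let $D_t\subseteq f^{-1}(t)$ be a dense set of points isolated in the fibre $f^{-1}(t)$ with $|D_t|\le\omega_1$, as provided by the hypothesis. For each $p\in D_t$ the set $f^{-1}(t)\setminus\{p\}$ is closed in $f^{-1}(t)$, hence closed in $K$; as $K$ is compact Hausdorff and therefore regular, I can choose an open $U_p\ni p$ with $\overline{U_p}\cap f^{-1}(t)=\{p\}$. I claim that
\[ \mathcal{Q}_x=\{f^{-1}[V_n]\cap U_p\colon n\in\omega,\ p\in D_t\} \]
is a $\pi$-base at $x$. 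Each of its members contains the corresponding $p$ (because $f(p)=t\in V_n$), so it is nonempty and open, and clearly $|\mathcal{Q}_x|\le\omega_1$.

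For the $\pi$-base property, let $W$ be open with $x\in W$. Then $W\cap f^{-1}(t)$ is a nonempty relatively open subset of the fibre, so by density of $D_t$ there is $p\in D_t\cap W$. The set $\overline{U_p}\setminus W$ is compact, and it is disjoint from $f^{-1}(t)$ since $\overline{U_p}\cap f^{-1}(t)=\{p\}\subseteq W$; hence $f[\overline{U_p}\setminus W]$ is a closed subset of $M$ not containing $t$. Picking $n$ with $V_n\cap f[\overline{U_p}\setminus W]=\emptyset$ yields $f^{-1}[V_n]\cap\overline{U_p}\subseteq W$, whence $f^{-1}[V_n]\cap U_p\subseteq W$, which proves the claim. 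Thus $\pi\chi(K)\le\omega_1$, and Theorem \ref{pi} completes the proof.

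The one delicate point — and the place a naive argument fails — is that one must arrange $\overline{U_p}\cap f^{-1}(t)=\{p\}$ rather than merely $U_p\cap f^{-1}(t)=\{p\}$: with only the latter, $\overline{U_p}\setminus W$ need not be disjoint from the fibre and the ``image of a compact set is closed and misses $t$'' step breaks down. Regularity of $K$ takes care of this. The degenerate possibilities ($f^{-1}(t)=\{p\}$, or $\overline{U_p}\subseteq W$, in which cases $\overline{U_p}\setminus W=\emptyset$) are immediate and require no separate treatment.
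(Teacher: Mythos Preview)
Your proof is correct. The core compactness step --- choosing an open $U_p$ whose closure meets the fibre only at the isolated point $p$, and then using that $f[\overline{U_p}\setminus W]$ is compact and misses $t$ --- is essentially the same as the paper's. The organisation, however, is different: you prove directly that $\pi\chi(K)\le\omega_1$, a purely topological fact requiring no set-theoretic hypothesis, and invoke $(\Re)$ only at the very end via Theorem \ref{pi}. The paper instead first applies Proposition \ref{non} (which already uses $(\Re)$) to obtain $X\subseteq K$ of size $\le\omega_1$ with $\mu^*(X)=1$, and then builds a \emph{global} $\pi$-base of size $\le\omega_1$ indexed by the points of $f[X]$ and the isolated points of those fibres, before appealing to Theorem \ref{pi}. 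Your route is a little cleaner in that it separates the topology from the set theory and does not replay the density-to-$\pi$-base passage already contained in the proof of Theorem \ref{pi}; you are also explicit about needing $\overline{U_p}\cap f^{-1}(t)=\{p\}$, whereas the paper only records $P_y\cap f^{-1}(t)=\{y\}$ but tacitly uses the closure version in its final contradiction.
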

\begin{proof}
	Fix a countable base $\mathcal{U}$ of $M$. Let $X\subseteq K$ be a set of size at most $\omega_1$ such that $\mu^*(X)=1$ (see Proposition \ref{non}). Let $Y = f[X]$.
	For each $t\in Y$ let $Y_t$ be the set of isolated points of $f^{-1}(t)$. For each $t\in Y$ and $y\in Y_t$ let $P_y$ be an open set such that $P_y \cap f^{-1}(t) = \{y\}$. We claim that
	\[ \mathcal{P} = \{P_y \cap f^{-1}[U] \colon y\in Y_t, t\in Y, U\in\mathcal{U}\} \]
	is a $\pi$-base of $K$ and thus, according to Theorem \ref{pi}, $K$ is separable.

	Let $V\subseteq K$ be an open set. Then there is $x\in V \cap X$. As the set of isolated points is dense in $f^{-1}(f(x))$, there is $y\in Y_{f(x)}$ such that $y\in V$. We claim that there is $U\in \mathcal{U}$ such that $P_y \cap f^{-1}[U]
	\subseteq V$ and $f(x)\in U$ (and so $P_y \cap f^{-1}[U]$ is non-empty).
	Suppose to the contrary that
	\[ P_y \cap f^{-1}[U])\setminus V \ne \emptyset \]
	for each $U\in \mathcal{U}$ such that $f(x)\in U$. Then also
	\[ \overline{P_y} \cap \overline{f^{-1}[U]} \setminus V \ne \emptyset \]
	and so by compactness
	\[ \overline{P_y} \cap f^{-1}(f(x)) \setminus V \ne \emptyset. \]
	But $\overline{P_y} \cap f^{-1}(f(x)) = \{y\}$ and $y\in V$, a contradiction.
\end{proof}

At last, we do not know if one can prove a theorem similar to Corollary \ref{zindulka} for spaces which cannot be mapped onto $[0,1]^{\omega_1}$.

\begin{prob}
	Is it consistent that there is no compact space which cannot be mapped continuously onto $[0,1]^{\omega_1}$ and which supports a normal measure?
\end{prob}

\section{Acknowledgements}
This research was completed whilst the authors were visiting fellows at the Isaac Newton Institute for Mathematical Sciences, Cambridge, in the
programme `Mathematical, Foundational and Computational Aspects of the Higher Infinite' (HIF).
We would like to thank Mirna D\v{z}amonja for interesting discussions on the subject of this paper.

\bibliographystyle{alpha}
\bibliography{smallb}

\end{document}